\title{Determinacy of adversarial Gowers games}
\author {Christian Rosendal}
\address{Department of Mathematics, Statistics, and Computer Science (M/C 249)\\
University of Illinois at Chicago\\
851 S. Morgan St.\\
Chicago, IL 60607-7045\\
USA}
\email{rosendal.math@gmail.com}
\urladdr{http://homepages.math.uic.edu/$_~$rosendal}
\thanks{The initial research for this article was done while the author was visiting V. Ferenczi at the
University of S\~ao Paulo, Brazil, with the support of FAPESP. This work was partially supported by a grant from the Simons Foundation (\#229959 to Christian Rosendal).
The author's research was likewise
supported by NSF grants  DMS 0919700, DMS 0901405 and DMS 1201295.}
\newcommand{\compl}{{\sim\!}}
\newcommand {\A}{\mathbb A}
\newcommand {\F}{\mathbb F}
\newcommand {\N}{\mathbb N}
\newcommand{\G}{\mathbb G}
\newcommand {\OO}{\mathbb O}
\newcommand{\om}{\omega}
\newcommand{\con}{\;\hat{}\;}
\newcommand{\tom} {\emptyset}
\newcommand{\saa}{\Rightarrow}
\newcommand{\equi}{\Longleftrightarrow}
\newcommand {\del}{ \; \big| \;}
\newcommand {\go} {\mathfrak}
\newcommand {\e} {\exists}
\renewcommand {\a} {\forall}
\newtheorem{thm}{Theorem}[section]
\newtheorem{lemme}[thm]{Lemma}
\newtheorem{prop} [thm] {Proposition}
\newtheorem{defi} [thm] {Definition}
\newtheorem{claim}[thm] {Claim}
\newtheorem{nota}[thm]{Notation}
\theoremstyle{definition}
\newtheorem{prob}[thm]{Problem}
\newtheorem{obs}[thm] {Observation}
\begin{document}

\subjclass[2000]{Primary: 46B03, Secondary 03E15}

\keywords{Ramsey Theory, Determinacy,  Infinite games in vector spaces}

\maketitle
\begin{abstract}
We prove a game theoretic dichotomy for $G_{\delta\sigma}$ sets of block sequences in vector spaces that extends, on the one hand, the block Ramsey theorem of W. T. Gowers proved for analytic sets of block sequences and, on the other hand, M. Davis' proof of ${\bf \Sigma}^0_3$ determinacy.
\end{abstract}

\section{Introduction}
In the present paper, we  prove an extension of W. T. Gowers' Ramsey theorem for block sequences in normed vector spaces \cite{gowers}. This was instrumental in the proof of his dichotomy for Banach spaces between containing an unconditional basic sequence or a hereditarily indecomposable subspace that ultimately led to a solution of the homogeneous space problem for Banach spaces.

The statement of Gowers' theorem is as follows. Assume that $\A$ is an analytic set of sequences $(y_n)$  of normalised vectors in a separable Banach space $E$ and, moreover,  any infinite-dimensional subspace $X\subseteq E$ contains a sequence from $\A$. Then there is an infinite-dimensional subspace $X\subseteq E$ for which one can sequentially choose the terms of some $(y_n)$ close to $\A$ such that the $y_n$ belong to any given infinite-dimensional subspaces $Y_n\subseteq X$. 

The precise statement is formulated in terms of a game in which player I plays the subspaces $Y_n\subseteq X$, while player II choses the vectors $y_n\in Y_n$. While we shall not follow Gowers' lead in dealing with normed vector spaces, but instead use the set-up of \cite{exact} and thus consider only vector spaces over countable fields, the exact results proved here easily imply slightly stronger, but approximate, statements for normed vector spaces as is shown in \cite{exact}. 

So suppose that $E$ is a countable-dimensional vector space over a countable field $\mathfrak F$. We define the {\em Gowers game} $G_X$ played below an infinite-dimensional subspace $X\subseteq E$ as follows. Players I and II alternate in playing respectively infinite-dimensional subspaces $Y_n\subseteq X$ and non-zero vectors $y_n\in Y_n$,
$$
{
\begin{array}{cccccccccccc}
{\bf I} & &Y_0 &&Y_1  & &Y_2  &&\ldots \\
{\bf II} & & &y_0\in Y_0  & &  y_1\in Y_1&    &y_2\in Y_2&\ldots
\end{array}
}
$$
Similarly, the {\em infinite asymptotic game} $F_X$ is defined as the Gowers game except that I is now required to play subspaces $Y_n$ of finite codimension in $X$ (in fact, even so-called {\em tail subspaces}). Thus, from the viewpoint of II, the game has not changed, but, in $F_X$, player  I will have significantly less control over where player II chooses his vectors. In both games, we say that the infinite sequence $(y_n)$ produced is the {\em outcome} of the game. 

Note that $E$ is a countable set and therefore the infinite power $E^\infty$ is a Polish space, i.e., separable and completely metrisable, when $E$ is endowed with the discrete topology. The version of Gowers' theorem proved in \cite{exact} states that if $\A\subseteq E^\infty$ is an analytic set, i.e., a continuous image of a Polish space, then there is an infinite-dimensional subspace $X\subseteq E$ such that either player I has a strategy in $F_X$ to force the outcome to lie in $\compl\, \A$ or player II has a strategy in $G_X$ to play into $\A$. We remark that, on the one hand, this is stronger than simply stating that the game $G_X$ is determined, since, clearly, if I has a strategy in $F_X$ to force the outcome to lie in $\compl\, \A$, then he also has a strategy in $G_X$ to the same effect. On the other hand, this strong determination comes at the price of passing to the subspace $X\subseteq E$, which indicates that Ramsey theory is involved. 

In \cite{pelczar}, A. M. Pelczar studied a variant of the Gowers game in which both players are directly contributing to the outcome. This was further refined in \cite{minimal} and, in  \cite{exact}, was formulated as the determinacy of two related {\em adversarial Gowers games}, $A_X$ and $B_X$. 

Again, for $X\subseteq E$ an infinite-dimensional subspace, we define the game $A_X$ by combining the games $G_X$ and $F_X$, letting player I of $A_X$ act simultaneously as player II of $F_X$ and player I of $G_X$, while player II of $A_X$ acts as player I of $F_X$ and player II of $G_X$. Concretely, I plays subspaces $Y_n\subseteq X$ and non-zero vectors $x_n$, while II plays subspaces $X_n\subseteq X$ and non-zero vectors $y_n$, satisfying $x_n\in X_n$ and $y_n\in Y_n$, 
$$
{
\begin{array}{cccccccccccc}
{\bf I} & & &Y_0, x_0\in X_0 &  & Y_1, x_1\in X_1 &  &\ldots \\
{\bf II} & &X_0 &  & X_1, y_0\in Y_0 &  & X_2, y_1\in Y_1   &\ldots
\end{array}
}
$$
Moreover,  $X_0,X_1,\ldots$ are required to have finite codimension in $X$ (again, they can be taken to be tail subspaces with respect to a given basis for $E$) and $Y_0,Y_1,\ldots$ are arbitrary infinite-dimensional subspaces of $X$.

The game $B_X$ is defined as the game $A_X$ except that we now require the spaces $Y_n$ to have finite codimension in $X$, while instead the $X_n$ can be arbitrary infinite-dimensional subspaces. Thus, in the game $A_X$, it is player I that have relatively tight control over the sequence of vectors played by II, since I is the one to play arbitrary infinite-dimensional subspaces of $X$. On the contrary, in the game $B_X$, the roles of I and II are reversed. In both games, the {\em outcome} is defined to be the infinite sequence $(x_0,y_0,x_1,y_1, \ldots)\in E^\infty$. 

Extending Theorem 12 in \cite{exact} for closed $\A$, the main result of our paper is the following. 
\begin{thm}\label{Gdeltasigma}
Suppose $\A\subseteq E^\infty$ is  $F_{\sigma\delta}$ or $G_{\delta\sigma}$. Then there is an infinite-dimensional subspace $X\subseteq E$
such that either
\begin{enumerate}
  \item II has a strategy in $A_X$ to play in $\compl\, \A$, or
  \item I has a strategy in $B_X$ to play in $\A$.
\end{enumerate}
\end{thm}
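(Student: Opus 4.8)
\smallskip
\noindent\emph{Proof plan.} The plan is to transpose M. Davis' inductive proof of ${\bf \Sigma}^0_3$-determinacy into the Gowers--Ramsey framework of \cite{exact}, taking the closed case (Theorem~12 of \cite{exact}) --- or rather its consequence for clopen payoff sets, which holds in both directions since a clopen set and its complement are both closed --- as the base of the induction, and using the passage to a subspace, together with a fusion argument, in place of Davis' passage to a subtree. Since the argument for $F_{\sigma\delta}$ sets $\A$ runs parallel to the one for $G_{\delta\sigma}$ sets --- with the two players and the two games $A_X$, $B_X$ interchanged throughout --- I concentrate on the latter. Fix once and for all a basis $(e_i)$ of $E$, fix a decomposition $\A=\bigcup_m\A^m$ into $G_\delta$ sets, and write $\A^m=\bigcap_k U^m_k$ with $U^m_k\subseteq E^\infty$ open; the aim is a single infinite-dimensional $X\subseteq E$ on which either I has a strategy in $B_X$ to play into some $\A^m\subseteq\A$, or II has a strategy in $A_X$ to play into $\bigcap_m\compl\A^m=\compl\A$.

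Two elementary monotonicity facts make any iteration possible, and I would record them at the outset. If $X'\subseteq X$, then intersecting II's subspace moves with $X'$ turns a strategy for II in $A_X$ forcing the outcome into a set $\mathcal C$ into such a strategy in $A_{X'}$; symmetrically, a strategy for I in $B_X$ forcing the outcome into $\mathcal C$ passes to $B_{X'}$. The subtlety is that stringing the clopen dichotomy together over the countably many clopen constituents of the $\A^m$ only yields a descending sequence of subspaces with a \emph{separate} winning strategy at each stage, whereas the theorem demands one subspace and one strategy; these must be amalgamated, and organising the amalgamation by an ordinal rank --- exactly as in Davis --- is what makes it succeed.

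Concretely I would run a Davis-style double recursion: an outer loop peeling off the $\A^m$ and, inside each, an inner loop peeling off the $U^m_k$, each peeling step being settled, via complements, by the clopen case. The mechanism that keeps the recursion moving is Davis' \emph{declaration} device: whenever a player is forced into a countable union of closed sets and this win is to serve as an ingredient of a later step, the win is taken in declaring form, i.e.\ at a finite stage the player commits to one of the closed pieces and thereafter keeps the outcome inside it; relative to that commitment the residual game has closed payoff, so the next peeling step applies. Carried through, either at some stage we reach I's side of the dichotomy --- and then I's strategy in $B_X$ witnesses the outcome in some $\A^m\subseteq\A$, finishing --- or we obtain a descending tower of subspaces, the $m$-th carrying a declaring strategy for II into $\compl\A^m$, and one final fusion produces $X$ on which II has a strategy in $A_X$ forcing the outcome into $\bigcap_m\compl\A^m=\compl\A$.

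The main obstacle --- the part that goes beyond a mere transcription of Davis --- is this fusion. At each stage the subspaces coming from the successive invocations of the clopen case must be merged into one infinite-dimensional $X$ on which \emph{every} previously constructed strategy is still legal, and the merge has to respect the asymptotic, tail-subspace component of $A_X$ and $B_X$: I would carry it out along a block sequence, exploiting the freedom in the Ramsey-type theorems to prolong a prescribed finite block sequence and the stabilisation behind Theorem~12, so that a strategy designed below one subspace still meets the finite-codimension demands when replayed below the fused subspace. Making the ``declare, then recurse'' step genuinely lower the complexity of the residual task by one Borel level while keeping this fusion under control is where the substance of the proof lies; the rest is a faithful, if laborious, rendering of Davis' argument.
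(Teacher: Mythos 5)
Your plan correctly identifies the overall shape of the argument --- a transposition of Davis' proof with a subspace-diagonalisation (fusion) replacing Davis' passage to subtrees --- and this is indeed what the paper does. But there is a genuine gap at the decisive step, namely the endgame you describe as ``a descending tower of subspaces, the $m$-th carrying a declaring strategy for II into $\compl\,\A^m$, and one final fusion produces $X$ on which II has a strategy in $A_X$ forcing the outcome into $\bigcap_m\compl\,\A^m$.'' Fusing the subspaces does not fuse the strategies: II can only follow one strategy at a time, and having for each $m$ a separate strategy $\sigma_m$ in $A_{X_m}$ avoiding $\A^m$, together with a common $X\subseteq^* X_m$, gives no strategy in $A_X$ avoiding the union. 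The diagonalisation solves the ``one subspace'' half of the problem but not the ``one strategy'' half, and the latter is where the substance of Davis' argument lives. The paper resolves it by coding II's quasistrategies in $A_V(\vec v)$ as sets of finite block sequences (the $(V,\vec v)$-rules $T\subseteq E^{<\infty}$) and building a \emph{single} rule $T$ position by position, attaching to each position $\vec v_k\in T$ a nested system of rules $S_l\subseteq S_k$ (for $\vec v_k\sqsubseteq\vec v_l$) with $[S_l]\cap\G_k=\tom$ whenever $\vec v_k\sqsubset\vec v_l$; a branch of $T$ then avoids every $\G_k$ because from some node onward it lives inside a rule disjoint from $\G_k$. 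Nothing in your proposal plays the role of this per-position, nested bookkeeping, and without it the final fusion claim is false as stated.

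A second, related omission: you propose to use the closed/clopen case (Theorem 12 of \cite{exact}) as a black box, but that theorem concerns the unrelativised games $A_X$, $B_X$, whereas the recursion needs the dichotomy at an arbitrary position $\vec v$ \emph{and} subject to the rule $T$ already constructed (the induced subgame $B^T_V(\vec v)$, in which II's earlier commitments constrain I). The hypothesis that is actually propagated through the recursion is the negative statement ``for all $W\subseteq V$, I has no strategy in $B^T_W(\vec v)$ to play in $\A$,'' and converting this into a one-step extension of II's quasistrategy (the paper's Lemma \ref{davis0}) and into sub-rules avoiding first ${\rm Int}(\A)$ (Lemma \ref{davis1}) and then a $G_\delta$ subset $\G\subseteq\A$ (Lemma \ref{davis2}, where your ``declaration'' device appears as I switching among strategies for $\A\cup\OO(C_n\cap R)$) is precisely the work the black box does not do for you. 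So the proposal needs, at minimum, the rule formalism and the relativised lemmas before the Davis recursion can be run.
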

As above, Theorem \ref{Gdeltasigma} is significantly stronger than merely requiring the games $A_X$ and $B_X$ to play into $\A$ to be determined, which of course is well-known. The main interest of the result lies in the fact that it provides a strong form of determinacy, namely, in each case, the winning player has a strategy in a game that a priori is particularly hard for him to play. However, again this comes at the cost of passing to the subspace $X\subseteq E$. 

It remains an open problem how much  this theorem can be improved. In the light of D. A. Martin's proof of Borel determinacy \cite{martin1,martin2}, it is tempting to believe that it should hold for Borel sets $\A$. However, this extension seems less than straightforward. The problem lies in combining Martin's proof (or the final result) with the Ramsey theoretical techniques necessary for the proof of Theorem \ref{Gdeltasigma}. The original proofs of determinacy for the first three levels of the Borel hierarchy  (D. Gale and F. M. Stewart \cite{gale} for open, P. Wolfe \cite{wolfe} for $G_\delta$ and M. Davis \cite{davis} for $G_{\delta\sigma}$) all proceed in second order arithmetic and thus the types of objects quantified over are at most subsets of the integers. This implies that these proofs commute sufficiently with our relatively simple Ramsey theory to be able to go through in our setting, albeit with some additional complications. 

On the other hand, the proofs of determinacy for more complicated Borel sets, J. Paris \cite{paris} for ${\bf \Sigma}^0_4$ and Martin for all of ${\bf \Delta}^1_1$, necessarily requires  a larger part of the set theoretical universe as shown by H. Friedman \cite{friedman} and recent refinements by A. Montalban and R. Shore \cite{montalban}. Thus, the proof of Borel determinacy demands $\om_1$ iterations of the power set operation and thus the existence of much larger sets than those involved in the actual statement of determinacy. The same of course also applies to proofs of determinacy in the presence of large cardinals, such as Martin's proof of analytic determinacy from a measurable cardinal \cite{martin3}. As a consequence, to prove determinacy of Borel games with moves in our space $E$, one  is led to consider other games on much larger sets for which the Ramsey theory loses meaning. Alternatively, the existence of large cardinals may itself lead to a better tree representation of Borel or analytic sets, which could prove useful for lifting Theorem \ref{Gdeltasigma} to general Borel sets.

It is of course quite possible that our theorem does not generalise to higher order Borel sets and thus the difference in proof theoretic strength between  ${\bf \Sigma}^0_3$ and Borel determinacy translates into a difference in truth value for the adversarial Gowers games.

\begin{prob}
Does Theorem \ref{Gdeltasigma} hold for all Borel sets $\A$ or even for analytic sets in the presence of large cardinals?
\end{prob}

As a last few words on these issues, let us also mention that, in a precise sense, the class of {\em adversarially Ramsey sets}, i.e., the class of sets $\A\subseteq  E^\infty$ satisfying the conclusion of Theorem \ref{Gdeltasigma}, is smaller than that of determined sets. That is, if $\bf \Gamma$ is a class of subsets of Polish spaces closed under continuous preimages and such that any $\bf \Gamma$ subset of $E^\infty$ is adversarially Ramsey, then any game on $\N$ to play in a $\bf \Gamma$ subset of $\N^\N$ is determined. To see this, we note that we can code elements of $\N^\N$ using sequences $(x_n)\in E^\infty$ by letting each $x_k\in E$ code a natural number by the coefficient of its first non-zero coordinate with respect to a fixed basis $(e_n)$ for $E$. So the limits of determinacy on games on $\N$ also limits the class of adversarially Ramsey sets. 

Finally, let us dispel a possible source of confusion concerning issues of determinacy. Though, formally, players I and II in the adversarial Gowers games play object of higher type, namely the infinite-dimensional subspaces $X_n$ and $Y_n$, this can easily be circumvented. For example, instead of letting I play all of the subspace $X_k$ at once, we can simply let him successively play the vectors of a basis for $X_k$ and allow II to wait to play a vector until he can find one that is a linear combination of the part of the basis that I has played thus far. As shown  by B. Velickovic in \cite{jordi}, this produces an equivalent game in which all moves now are of lower type, namely vectors in $E$. 

For applications of the above dichotomies to the geometry of Banach spaces, we refer the reader to \cite{gowers}, \cite{minimal} and \cite{alpha}.

\noindent{\em Acknowledgement}: The author is grateful for a number of insightful comments and useful discussions with A. Montalban, J. Moore and P. Welch on the topic of this paper.

\section{Notation}
Fix  a countable field $\mathfrak F$ and let $E$ be the countable-dimensional $\go
F$-vector space with basis $(e_n)$. We shall use $x,y,z,v$ as variables for {\em non-zero}
elements of $E$. If $x=\sum a_ne_n\in E$, the {\em support} of $x$ is the finite non-empty set  
$$
{\rm supp}\; x=\{n\del a_n\neq 0\}.
$$
A finite or infinite sequence $(x_0,x_1,x_2,x_3,\ldots)$ of non-zero vectors is said to be a {\em block sequence}
if  
$$
\max {\rm supp}\; x_n<\min{\rm supp}\; x_{n+1}
$$ 
for all $n$. In particular, the terms of a block sequence are linearly independent.

Notice that, by elementary linear algebra, for all infinite dimensional
subspaces $X\subseteq E$ there is a subspace $Y\subseteq X$ spanned by an
infinite block sequence, called a {\em block subspace}. So, henceforth, we use
variables $X,Y,Z, V, W$ to denote infinite dimensional block subspaces of $E$. Also, if $X\subseteq E$ is a block subspace and $k$ a natural number, we let $X[k]=\{x\in X\del k<\min {\rm supp}\; x\}$, which is a cofinite-dimensional so-called {\em tail subspace} of $X$.
Finally, we denote infinite block sequences by variables $\bf x,y,z$ and finite block
sequences by variables $\vec x, \vec y, \vec z$. We use the symbols $\sqsubseteq$ and $\sqsubset$ to denote end-extension, respectively, proper end-extension, of finite sequences.

If $X$ and $Y$ are block subspaces, we write $Y\subseteq^*X$ to denote that $Y[n]\subseteq X$ for some sufficiently large $n$ and so, in particular, that $Y\cap X$ has finite codimension in $Y$.
A principle, that will be used repeatedly here, is the fact that if $X_0\supseteq X_1\supseteq \ldots$ is an infinite descending sequences of block subspaces then there is a block subspace $Y\subseteq X_0$ such that $Y\subseteq^*X_n$ for all $n$. To see this, suppose that each $X_n$ is spanned by a block sequence $(x_k^n)_k$ and note that then $(x_0^0,x_1^1,x_2^2,\ldots)$ is also a block sequence. Moreover, if $Y=[x_0^0,x_1^1,x_2^2,\ldots]$ denotes the linear span, then $Y\subseteq^*X_n$ for all $n$.

We equip $E$ with the discrete topology,
whereby any subset is open, and equip its countable power $E^\infty$ with the
product topology. Since $E$ is a countable discrete set, $E^\infty$ is a Polish
space. Notice that a basis for the topology on $E^\infty$ is given by sets of
the form
$$
N_{(x_0,\ldots,x_k)}=\{(y_n)\in E^\infty\del y_0=x_0\;\&\;\ldots\;\&\;y_k=x_k\},
$$
where $x_0,\ldots,x_k\in E$ (possibly zero vectors). Finally, $E^{<\infty}$ will denote the set of finite block sequences in $E$.

%%%%%%%%%%%%%%%%%%%%%%%%%%%%%%%%%%%%%%%%%%%%%%%

\section{Adversarial games}
\subsection*{The game $A_V(\vec v)$}
Suppose $V\subseteq E$. We define the game $A_V$ played below $V$ between two
players I and II as follows: I and II alternate  in choosing block subspaces
$Z_0,Z_1,Z_2,\ldots\subseteq V$ and vectors $x_0,x_1,x_2,\ldots\in V$,
respectively natural numbers $n_0,n_1,n_2,\ldots$ and vectors $y_0,y_1,y_2,\ldots\in
V$ according to the constraints $ x_i\in V[n_i]$ and $y_i\in Z_i$:
$$
{
\begin{array}{cccccccccccc}
{\bf I} & & &x_0\in V[n_0], Z_0 &  & x_1\in V[n_1],Z_1 &  &\ldots \\
{\bf II} & &n_0 &  & y_0\in Z_0,n_1 &  & y_1\in Z_1,n_2   &\ldots
\end{array}
}
$$
We say that the sequence $(x_0,y_0,x_1,y_1,\ldots)$ is the {\em outcome} of the
game.

If $\vec v$ is a finite block sequence of {\em even} length, the game $A_V(\vec
v)$ is defined as above except that the outcome is now $\vec v\con
(x_0,y_0,x_1,y_1,\ldots)$.

On the other hand, if $\vec v$ is a finite block sequence of {\em odd} length,
$A_V(\vec v)$ is defined in a similar way as before except that I begins the game:
$$
\begin{array}{cccccccccccc}
{\bf I} & & Z_0 &  & x_0\in V[n_0],Z_1 &  & x_1\in V[n_1],Z_2 &\ldots \\
{\bf II} &  &  & y_0\in Z_0,n_0 &  & y_1\in Z_1,n_1 &  &\ldots
\end{array}
$$
and the {\em outcome} is now $\vec v\con(y_0,x_0,y_1,x_1,\ldots)$ rather than
$\vec v\con(x_0,y_0,x_1,y_1,\ldots)$.

\subsection*{The game $B_V(\vec v)$}
We define the game $B_V$ in a similar way to $A_V$ except that we now have I
playing integers and II playing block subspaces:
$$
\begin{array}{cccccccccccc}
{\bf I} &  &  & x_0\in Z_0,n_0 &  & x_1\in Z_1,n_1 &  &\ldots       \\
{\bf II} & & Z_0 &  & y_0\in V[n_0],Z_1 &  & y_1\in V[n_1],Z_2 &\ldots
\end{array}
$$
with $x_i\in Z_i\subseteq V$ and $y_i\in V[n_i]$. Again, the {\em outcome} is
$(x_0,y_0,x_1,y_1,\ldots)$.

If $\vec v$ is a finite block sequence of {\em even} length, the game $B_V(\vec
v)$ is defined as above except that the outcome is now $\vec v\con
(x_0,y_0,x_1,y_1,\ldots)$.

On the other hand, if $\vec v$ is a finite block sequence of {\em odd} length,
$B_V(\vec v)$ is defined by letting I begin:
$$
\begin{array}{cccccccccccc}
{\bf I} & &n_0 &  & x_0\in Z_0,n_1 &  & x_1\in Z_1,n_2 &  \ldots    \\
{\bf II} & & &y_0\in V[n_0], Z_0 &  &y_1\in V[n_1],Z_1 &  &\ldots
\end{array}
$$
and the {\em outcome} is now $\vec v\con (y_0,x_0,y_1,x_1,\ldots)$.

\

Thus, in both games $A_V$ and $B_V$, one should remember that I is the {\em
first} to play a vector. And in $A_V$, I plays block subspaces and II plays
tail subspaces, while in $B_V$, II takes the role of playing block subspaces
and I plays tail subspaces.

The central distinction between the two games lies in the fact that it is, in general, much easier to pick vectors in a tail subspace than in an arbitrary block subspace. Thus, in the game $A_V$, player II has to make choices of vectors in potentially coinfinite-dimensional subspaces $Z_i\subseteq V$ selected by I, while only being able to force I to make his choices of vectors in subspaces of finite codimension. 
So $A_V$ is harder to play for II than the game $B_V$, while the opposite is the case for player I.

%%%%%%%%%%%%%%%%%%%%%%%%%%%%%%%%%%%%%%%%%%%%%%%%%

\section{Quasistrategies}
A strategy  for II in the game $A_V(\vec v)$ is a function that to each position of the game in which II is to play, say $(n_0,x_0,Z_0,y_0,n_1,\ldots, y_k,n_{k+1}, x_{k+1}, Z_{k+1})$, associates the next required move of II. 
Alternatively, the strategy can be identified with the set of positions that have been played according to the strategy.
However, since the outcome only consists of the sequence of vectors $x_0,y_0,x_1,y_1,\ldots$, it is reasonable to expect that strategies should also only depend on the initial part of the outcome, i.e., $(x_0,y_0,\ldots,x_k,y_k, x_{k+1})$, together with the condition $Z_{k+1}$, rather than on $(n_0,x_0,Z_0,y_0,n_1,\ldots, y_k,n_{k+1}, x_{k+1}, Z_{k+1})$. Now, as we shall verify, this means that a quasistrategy for II in $A_V(\vec v)$ can be seen as a subset of $E^{<\infty}$ with certain extension properties. 
Moreover, as these quasistrategies for II in $A_V(\vec v)$ will be used as self imposed rules for II in $B_V(\vec v)$, we shall denote them as rules rather than quasistrategies.

\begin{defi}
Assume $V\subseteq E$ and $\vec v$ is a finite block sequence. A {\em $(V,\vec v)$-rule}  is a subset  $T\subseteq E^{<\infty}$ such that $\vec v \in T$ and 
\begin{enumerate}
\item[(i)] if $\vec y\in T$ and $|\vec y|$ is odd, then for any $Z\subseteq V$ there is some $z\in Z$ such that $\vec y\con z\in T$,
\item[(ii)] if $\vec y\in T$ and $|\vec y|$ is even, then there is some $n$ such that $\vec y\con z\in T$ for all $z\in V[n]$.
\end{enumerate}
\end{defi}

\begin{nota}
Let $T\subseteq E^{<\N}$ be any subset. We define the set of {\em infinite branches} of $T$ by
$$
[T]=\{(x_n)\in E^\infty\del \e^\infty m\;(x_0,x_1,\ldots,x_m)\in T\}.
$$
Also, if $\vec x$ is a finite block sequence, we let 
$$
T_{\vec x}=\{\vec y\del \vec x\con \vec y\in T\}.
$$
\end{nota}

The following result is not used in the proof of the main result, but clarifies the nature of quasistrategies. 

\begin{prop}\label{quasistrategy}
Let $\A\subseteq E^\infty$, $V\subseteq E$ and $\vec v$ be a finite block sequence. Then II has a strategy $\sigma$ in $A_V(\vec v)$ to play in $\A$ if and only if there is a $(V,\vec v)$-rule $T$  such that $[T]\subseteq \A$.
\end{prop}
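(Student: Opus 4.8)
The plan is to establish both directions of the equivalence by unwinding the definitions of strategies and rules, using the observation (already motivated in the text preceding the Definition) that, although a strategy for II in $A_V(\vec v)$ formally depends on the full position $(n_0,x_0,Z_0,y_0,\ldots)$, its relevant content is captured by the resulting initial segment of the outcome together with the current condition $Z$.

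For the forward direction, suppose $\sigma$ is a strategy for II in $A_V(\vec v)$ forcing the outcome into $\A$. I would define $T$ to be the set of all finite block sequences $\vec v\con(x_0,y_0,x_1,y_1,\ldots)$ (adjusting parity appropriately when $|\vec v|$ is odd, so that I plays first) that arise as initial segments of outcomes of runs of $A_V(\vec v)$ in which II follows $\sigma$; one also throws $\vec v$ itself into $T$. To check that this is a $(V,\vec v)$-rule: if $\vec y\in T$ has odd length, then the next move in the corresponding game position is I's, and I may choose any block subspace $Z\subseteq V$; whatever $Z$ I picks, $\sigma$ dictates a vector $z\in Z$ (here one uses that $\sigma$ is a genuine strategy defined on all positions, and that II's move $y$ must lie in $Z$ by the rules of $A_V$), and $\vec y\con z\in T$, giving (i). If $\vec y\in T$ has even length, the next move is I's choice of a vector $x\in V[n]$ for an $n$ that II has just specified via $\sigma$ (in $A_V$, II plays the integers $n_i$), and for \emph{every} such $x\in V[n]$ we get $\vec y\con x\in T$, giving (ii). Finally, if $(x_n)\in[T]$, then by König-type reasoning there is a single run of $A_V(\vec v)$ according to $\sigma$ whose outcome is $(x_n)$ — one must check that the infinitely many consistent finite extensions cohere into one infinite play, which uses that at each of II's even-length stages the integer $n$ is determined by $\sigma$ and hence the finite approximations in $T$ are genuinely compatible — and since $\sigma$ forces the outcome into $\A$, we conclude $(x_n)\in\A$, i.e.\ $[T]\subseteq\A$.

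For the reverse direction, suppose $T$ is a $(V,\vec v)$-rule with $[T]\subseteq\A$. I would have II play so as to keep the partial outcome inside $T$: at a stage where II must respond to I's chosen block subspace $Z\subseteq V$ with a vector $y\in Z$, the current partial outcome $\vec y$ has odd length and lies in $T$, so by (i) there is $z\in Z$ with $\vec y\con z\in T$ — II plays such a $z$; at a stage where II must specify an integer $n$, the partial outcome $\vec y$ has even length and is in $T$, so by (ii) II chooses an $n$ witnessing (ii), guaranteeing that whatever vector $x\in V[n]$ player I subsequently plays, the extended outcome $\vec y\con x$ is again in $T$. (One should make these choices via a fixed selector so that $\sigma$ is a bona fide strategy, and handle the odd-$|\vec v|$ case where I moves first, and the very first move, by the same case analysis.) By induction every finite initial segment of the outcome lies in $T$, so the full outcome belongs to $[T]\subseteq\A$.

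The routine bookkeeping — parity of $|\vec v|$, who moves first, and the fact that the formal position carries the integers $n_i$ and subspaces $Z_i$ while $T$ only remembers vectors — is where the argument is fiddly but not deep. The one genuine point requiring care, and the step I expect to be the main obstacle, is the direction showing $[T]\subseteq\A$ suffices: namely verifying that an infinite sequence all of whose initial segments (cofinally) lie in $T$ really is realized as the outcome of a \emph{single} infinite run consistent with the strategy built from $T$, rather than merely being approximated by many incompatible partial runs. This is handled by noting that at each even stage II's integer move is forced by the rule's witness and at each odd stage I's subspace move together with the witness determines a legal continuation, so the finite plays assemble coherently; this is exactly the informal remark in the paragraph before the Definition that strategies "should only depend on the initial part of the outcome."
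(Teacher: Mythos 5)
Your reverse direction (from a $(V,\vec v)$-rule $T$ with $[T]\subseteq\A$ to a strategy for II) is correct and coincides with the paper's argument. The gap is in the forward direction, precisely at the point you yourself flag as the main obstacle. If you define $T$ as the set of \emph{all} vector-traces of positions consistent with $\sigma$, then properties (i) and (ii) of a $(V,\vec v)$-rule do hold (for each node pick any witnessing position and apply $\sigma$ there), but the claim $[T]\subseteq\A$ does not follow from the argument you give. A fixed finite block sequence $(x_0,y_0,\ldots,x_k,y_k)$ may be the trace of many mutually incompatible $\sigma$-positions, differing in the subspaces $Z_i$ chosen by I; and since $\sigma$ is a function of the \emph{full} position, the integers $n_i$ it dictates depend on those $Z_i$ and so are not determined by the vector sequence alone, contrary to what you assert. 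Consequently the witnessing positions for the successive initial segments of a branch of $T$ need not cohere into a single infinite run, and ``K\"onig-type reasoning'' is unavailable because the set of positions lying over a fixed trace branches infinitely (indeed over uncountably many choices of block subspaces $Z_i$). In general the set of outcomes of runs according to $\sigma$ is only the projection of the closed set $[\sigma]$, hence analytic, and it can be a proper subset of the set of sequences all of whose initial segments are traces of $\sigma$-positions; so your $T$ may well have branches outside $\A$.

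The paper repairs exactly this point: it defines $T_{\vec v}$ level by level \emph{simultaneously} with a monotone map $\phi$ assigning to each node of $T$ a single position of $A_V(\vec v)$ played according to $\sigma$, and it admits a sequence into $T$ only when it arises by extending the already-selected position $\phi$ of its predecessor (choosing, once and for all, one $Z_k$ and $n_{k+1}$ when several would do). Monotonicity of $\phi$ then guarantees that along any infinite branch $(x_0,y_0,x_1,y_1,\ldots)$ of $T_{\vec v}$ the positions $\phi(x_0)\sqsubseteq\phi(x_0,y_0)\sqsubseteq\cdots$ union to a single run of $\sigma$ whose outcome is $\vec v\con(x_0,y_0,\ldots)$, which therefore lies in $\A$. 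Your proof becomes correct once you replace ``all witnesses'' by this recursive choice of one coherent witness per node.
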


\begin{proof}Obviously, any $(V,\vec v)$-rule $T$ such that $[T]\subseteq \A$ provides a strategy $\sigma$ for II in $A_V(\vec v)$ to play in $\A$. For it suffices that $\sigma$ ensures that every position of the game belongs to $T_{\vec v}$.

Conversely, suppose that $\sigma$ is a strategy for II to $A_V(\vec v)$ to play in $\A$ and assume that $\vec v$ has even length,  the case when the length is odd being similar. We identify the strategy $\sigma$ with the tree of legal positions in $A_V(\vec v)$ in which II has played according to the strategy.
We define $T_{\vec v}\cap E^n$ by induction on $n$ simultaneously with a monotone function $\phi$ assigning to each element of $T_{\vec v}$ some position in $A_V(\vec v)$ in which II has played according to $\sigma$. More precisely, $\phi$ takes values of the following form
\begin{displaymath}\begin{split}
\phi(x_0,y_0,&x_1,y_1,\ldots,x_k,y_k)\\
&=(n_0,x_0,Z_0,y_0,n_1,x_1,Z_1,y_1,\ldots,n_k,x_k,Z_k,y_k,n_{k+1})\in \sigma
\end{split}\end{displaymath}
and
$$
\phi(x_0,y_0,x_1,y_1,\ldots,x_k)=(n_0,x_0,Z_0,y_0,n_1,x_1,Z_1,y_1,\ldots,n_k,x_k).
$$

\noindent (i) First, let $\vec v\in T$ and let $\phi(\tom)=n_0$, where $n_0$ is the first play of II according to the strategy $\sigma$. 

\noindent (ii) Now, if $(x_0,y_0,x_1,y_1,\ldots,x_k,y_k)\in T_{\vec v}$ and 
\begin{displaymath}\begin{split}
\phi(x_0,y_0,&x_1,y_1,\ldots,x_k,y_k)\\
&=(n_0,x_0,Z_0,y_0,n_1,x_1,Z_1,y_1,\ldots,n_k,x_k,Z_k,y_k,n_{k+1})\in \sigma
\end{split}\end{displaymath}
has been defined, we let $(x_0,y_0,x_1,y_1,\ldots,x_k,y_k,x_{k+1})\in T_{\vec v}$ for all $x_{k+1}\in X[n_{k+1}]$ and set
\begin{displaymath}\begin{split}
\phi(x_0,y_0,&x_1,y_1,\ldots,x_k,y_k,x_{k+1})\\
&=(n_0,x_0,Z_0,y_0,n_1,x_1,Z_1,y_1,\ldots,n_k,x_k,Z_k,y_k,n_{k+1},x_{k+1}).
\end{split}\end{displaymath}

\noindent (iii) And, if $(x_0,y_0,x_1,y_1,\ldots,x_k)\in T_{\vec v}$ and 
\begin{displaymath}\begin{split}
\phi(x_0,y_0,&x_1,y_1,\ldots,x_k)=(n_0,x_0,Z_0,y_0,n_1,x_1,Z_1,y_1,\ldots,n_k,x_k)
\end{split}\end{displaymath}
has been defined, we put $(x_0,y_0,x_1,y_1,\ldots,x_k,y_k)\in T_{\vec v}$ if there are $Z_k\subseteq X$ and $n_{k+1}$ such that $y_k\in Z_k$ and 
$$
(n_0,x_0,Z_0,y_0,n_1,x_1,Z_1,y_1,\ldots,n_k,x_k,Z_k,y_k,n_{k+1})\in\sigma.
$$
In this case, we choose any such $Z_k$ and $n_{k+1}$ and let 
\begin{displaymath}\begin{split}
\phi(x_0,y_0,&x_1,y_1,\ldots,x_k,y_k)\\
&=(n_0,x_0,Z_0,y_0,n_1,x_1,Z_1,y_1,\ldots,n_k,x_k,Z_k,y_k,n_{k+1}).
\end{split}\end{displaymath}

Since $\sigma$ is a strategy for II, it is easy to verify that $T$ defined as above is a $(V,\vec v)$-rule (in fact, $T_{\vec v}$ is also a pruned tree). Also, if $(x_0,y_0,x_1,y_1,x_2,y_2,\ldots)\in T_{\vec v}$, we see that
$$
\phi(\tom)\sqsubseteq \phi(x_0)\sqsubseteq\phi(x_0,y_0)\sqsubseteq \phi(x_0,y_0,x_1)\sqsubseteq\ldots
$$
and $\phi(x_0,y_0,\ldots,x_k,y_k)\in \sigma$ for all $k$. So $\bigcup_k\phi(x_0,y_0,\ldots,x_k,y_k)\in [\sigma]$. Since the strategy $\sigma$ plays in $\A$, it follows that the outcome $\vec v\con (x_0,y_0,x_1,y_1,x_2,y_2,\ldots)$ of the run  $\bigcup_k\phi(x_0,y_0,\ldots,x_k,y_k)$ of the game $A_V(\vec v)$ must lie in $\A$. Thus $[T]\subseteq \A$.
\end{proof}

As mentioned above, a $(V,\vec v)$-rule $T$ can be viewed as a self imposed rule for player II in the game $B_V(\vec v)$.

\begin{defi}
Suppose  a $(V,\vec v)$-rule  $T$ is given. The {\em $T$-induced subgame} $B_V^T(\vec v)$ of $B_V(\vec v)$ is played as the game $B_V(\vec v)$ subject to the additional condition that any position $(x_0,y_0,\ldots,x_k,y_k)$ or $(x_0,y_0,\ldots,x_k)$ 
in $B_V^T(\vec v)$ should belong to $T_{\vec v}$.
 \end{defi}
 
 We note that the game $B^T_V(\vec v)$ imposes essentially no new requirements on player I compared to the game $B_V(\vec v)$. This is because if $\vec x\in T_{\vec v}$ is a position of the game so that I is to play, i.e., $|\vec v\con \vec x|$ is even, then there is an $m$ such that $\vec v\con \vec x\con y\in T$ for all $y\in V[m]$. So player I may just assume that he is responding to some $n\geqslant m$ played by II in $B_V(\vec v)$.

\begin{obs}We remark that if $T$ is a $(V,\vec v)$-rule, then for any $W\subseteq^*V$ and any $\vec w\in T$,  $T$ is an $(W,\vec w)$-rule.\end{obs}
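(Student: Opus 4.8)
This is essentially immediate from unwinding the definitions; the one subtlety is that an infinite-dimensional block subspace $Z$ of $W$ need not be contained in $V$, only almost so. The plan is as follows. Since $W\subseteq^*V$, first fix $m$ with $W[m]\subseteq V$. As $\vec w\in T$ by hypothesis, it only remains to verify conditions (i) and (ii) in the definition of a $(W,\vec w)$-rule; note that these are literally conditions (i) and (ii) for a $(V,\vec v)$-rule with $V$ replaced by $W$, and in particular do not refer to the base sequence $\vec v$ at all.

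For (i), given $\vec y\in T$ of odd length and an arbitrary block subspace $Z\subseteq W$, I would pass to the tail subspace $Z[m]\subseteq W[m]\subseteq V$, which is still infinite-dimensional, and apply property (i) of the $(V,\vec v)$-rule to $Z[m]$; this yields some $z\in Z[m]\subseteq Z$ with $\vec y\con z\in T$, as needed.

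For (ii), given $\vec y\in T$ of even length, property (ii) of the $(V,\vec v)$-rule supplies an $n_0$ with $\vec y\con z\in T$ for every $z\in V[n_0]$; then $n=\max(n_0,m)$ works for $W$, since any $z\in W[n]$ lies in $W[m]\subseteq V$ and satisfies $\min{\rm supp}\; z>n\geqslant n_0$, hence belongs to $V[n_0]$. There is no genuine obstacle here; the only thing to watch is to choose the witnessing integer in (ii) large enough to absorb both $n_0$ and the index $m$ coming from $W\subseteq^*V$, and to remember when checking (i) that tails of block subspaces of $W$ eventually sit inside $V$.
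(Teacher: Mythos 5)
Your proof is correct and is exactly the unwinding the paper has in mind (the paper states this Observation without proof, treating it as immediate). The two points you isolate — replacing an arbitrary $Z\subseteq W$ by the tail $Z[m]\subseteq V$ in condition (i), and taking $n=\max(n_0,m)$ in condition (ii) — are precisely the details being suppressed.
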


%%%%%%%%%%%%%%%%%%%%%%%%%%%%%%%%%%%%%%%

\section{Proof of the main theorem}
Note that, in the game $B^T_V(\vec v)$, player I has to pick vectors in arbitrarily small subspaces chosen by II while only being able to force II to pick vectors in tail subspaces. Therefore, by the asymptotic nature of the games, it is easy to see that any strategy for I in $B^T_V({\vec v})$ to play into a set $\A$ immediately provides a strategy for I in $B^T_W(\vec v)$ to play in $\A$ as long as $W\subseteq ^*V$.

\begin{obs}\label{hered}
Suppose $\A\subseteq E^\infty$, $W\subseteq^*V$, $\vec v\in E^{<\infty}$ and a $(V,\vec v)$-rule $T\subseteq E^{<\infty}$ are given such that
$$
\text{I has a strategy in $B^T_V(\vec v)$ to play in $\A$}.
$$
Then also
$$
\text{I has a strategy in $B^T_W(\vec v)$ to play in $\A$}.
$$
\end{obs}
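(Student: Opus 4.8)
The plan is to show that a strategy $\tau$ for I in $B^T_V(\vec v)$ can be transported verbatim to a strategy in $B^T_W(\vec v)$, using the fact that $W\subseteq^* V$ makes every move available in the $W$-game also available (up to a tail truncation) in the $V$-game. First I would fix $n_0$ large enough that $W[n_0]\subseteq V$; then any block subspace of $W$ is a fortiori a block subspace of $V$, and any tail subspace $V[m]$ with $m\geqslant n_0$ is such that $V[m]\supseteq W[m]$. Recall also the last observation of the previous section: since $T$ is a $(V,\vec v)$-rule and $W\subseteq^* V$, for any $\vec w\in T$ with $W[n]\subseteq V$ (e.g. $\vec w=\vec v$), $T$ is also a $(W,\vec w)$-rule, so the induced subgame $B^T_W(\vec v)$ is well-defined.

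Next I would describe the simulation. Suppose we are playing $B^T_W(\vec v)$ and want I to follow $\tau$. Whenever II plays a block subspace $Z_i\subseteq W$ in the $W$-game, we feed it to $\tau$ as a legal move in the $V$-game (legal since $Z_i\subseteq W\subseteq^* V$, and after passing to a tail $Z_i[k]$ if necessary it lies inside $V$; since the games are asymptotic this truncation is harmless and in fact, as noted in the remark following the definition of $B^T_V(\vec v)$, I may always assume II's integer moves are as large as he likes). Conversely, whenever $\tau$ instructs I to play a pair $(x_i\in Z_i, n_{i+1})$, I plays the same pair in the $W$-game; the constraint $x_i\in Z_i$ is identical in both games, and II's subsequent obligation to play $y_{i+1}\in W[n_{i+1}]$ is more restrictive than playing in $V[n_{i+1}]$, so the position remains one that $\tau$ can continue. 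Throughout, the condition that positions lie in $T_{\vec v}$ is literally the same requirement in $B^T_W(\vec v)$ as in $B^T_V(\vec v)$, by the observation recalled above. Hence every run of $B^T_W(\vec v)$ in which I follows the simulated strategy projects to a run of $B^T_V(\vec v)$ in which I follows $\tau$, with the \emph{same} outcome $\vec v\con(x_0,y_0,x_1,y_1,\ldots)$.

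Finally, since $\tau$ guarantees the outcome of $B^T_V(\vec v)$ lies in $\A$, and the outcome is unchanged under the simulation, the simulated strategy guarantees the outcome of $B^T_W(\vec v)$ lies in $\A$, as required. The only point demanding any care — and the one I would expect to be the mild ``main obstacle'' — is the bookkeeping around tail truncations: a priori II's block subspace $Z_i\subseteq W$ need not sit inside $V$, only $Z_i\subseteq^* V$, so to apply $\tau$ one passes to $Z_i[k_i]\subseteq V$ for suitable $k_i$; one must then check that I's move $x_i\in Z_i[k_i]$ returned by $\tau$ is still a legal move \emph{into $Z_i$} in the $W$-game (it is, since $Z_i[k_i]\subseteq Z_i$) and that the integer $n_{i+1}$ returned by $\tau$ still forces $T_{\vec v}$-membership of the continuation (it does, since the asymptotic structure of the game lets I demand larger integers, and $T$ being a $(W,\cdot)$-rule supplies, for every even-length position, an integer beyond which all extensions stay in $T$). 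This is exactly the content of the remark before Observation~\ref{hered}, so no new idea is needed.
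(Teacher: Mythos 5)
Your proof is correct and follows essentially the same route as the paper, which disposes of this observation in a single sentence appealing to the asymptotic nature of the games: you transport I's strategy by truncating II's subspaces $Z_i\subseteq W$ to tails $Z_i[k_i]\subseteq V$ and, where needed, inflating I's integer moves past an $n_0$ with $W[n_0]\subseteq V$, so the outcome is unchanged. The only slip is the passing claim that $W[n_{i+1}]\subseteq V[n_{i+1}]$ outright (false for small $n_{i+1}$, since $W\subseteq^*V$ only gives containment of a tail), but you repair this yourself in the last paragraph by noting that I, who plays the integers in $B_W$, may simply demand larger ones.
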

On the other hand, while I may not have a strategy in $B_V(\vec v)$ to play into $\A$, he could have one in $B_W(\vec v)$ for some $W\subseteq^*V$. This situation is remedied by the following simple diagonalisation.

\begin{lemme}\label{diag}
Suppose $\A\subseteq E^\infty$, $V\subseteq E$, $\vec v\in E^{<\infty}$ and a $(V,\vec v)$-rule $T\subseteq E^{<\infty}$ are given. Then there is $X\subseteq V$ so that, for all $\vec x\in T$,
\[\begin{split}
\e Y\subseteq X, &\text{ I has a strategy in $B^T_Y(\vec x)$ to play in $\A$ }\\
&
\equi\;\;
\a Y\subseteq X, \text{ I has a strategy in $B^T_Y(\vec x)$ to play in $\A$}.
\end{split}\]
\end{lemme}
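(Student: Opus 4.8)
The plan is to prove the lemma by a routine diagonalisation over the countably many finite block sequences, using Observation \ref{hered} as essentially the only tool. Note first that the implication ``$\Leftarrow$'' in the displayed equivalence is trivial, since $X$ is itself a block subspace of $X$; so the entire content is to arrange, for every $\vec x\in T$, that the mere \emph{existence} of some $Y\subseteq X$ with I having a strategy in $B^T_Y(\vec x)$ to play in $\A$ already forces I to have such a strategy in $B^T_Y(\vec x)$ for \emph{every} $Y\subseteq X$. The one point that needs attention is the mismatch between the literal inclusion $Y\subseteq X$ occurring in the statement and the relation $\subseteq^*$ under which winning strategies are hereditary by Observation \ref{hered}; this is bridged by the simple remark that whenever $Y\subseteq^* W$ one has $Y[n]\subseteq W$ for all large enough $n$, so one may freely replace $Y$ by a tail subspace $Y[n]\subseteq^* Y$ sitting inside a prescribed ambient block subspace.

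Concretely, I would fix an enumeration $\vec x_0,\vec x_1,\vec x_2,\ldots$ of $E^{<\infty}$ and recursively build a descending sequence $V=W_0\supseteq W_1\supseteq W_2\supseteq\cdots$ of block subspaces as follows. Given $W_k$: if $\vec x_k\in T$ and there is a block subspace $Y\subseteq W_k$ such that I has a strategy in $B^T_Y(\vec x_k)$ to play in $\A$, let $W_{k+1}$ be one such $Y$; otherwise put $W_{k+1}=W_k$. Here, whenever $\vec x_k\in T$ and $Y\subseteq W_k$, the game $B^T_Y(\vec x_k)$ is meaningful, since $Y\subseteq^* V$ together with $\vec x_k\in T$ implies that $T$ is a $(Y,\vec x_k)$-rule. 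Finally, apply the diagonalisation principle recorded in the Notation section to obtain a block subspace $X\subseteq V$ with $X\subseteq^* W_k$ for all $k$; this $X$ is the subspace we want.

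To verify the conclusion, fix $\vec x\in T$, pick $k$ with $\vec x=\vec x_k$, and assume there is $Y\subseteq X$ such that I has a strategy in $B^T_Y(\vec x_k)$ to play in $\A$. Since $X\subseteq^* W_k$ we get $Y\subseteq^* W_k$, so $Y[n]\subseteq W_k$ for some $n$; as $Y[n]\subseteq^* Y$, Observation \ref{hered} yields a strategy for I in $B^T_{Y[n]}(\vec x_k)$ to play in $\A$. Hence stage $k$ of the construction fell into the non-trivial case, so $W_{k+1}\subseteq W_k$ is a block subspace in which I has a strategy in $B^T_{W_{k+1}}(\vec x_k)$ to play in $\A$. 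Now let $Y'\subseteq X$ be arbitrary; then $Y'\subseteq^* W_{k+1}$, and a second application of Observation \ref{hered} gives I a strategy in $B^T_{Y'}(\vec x_k)$ to play in $\A$. Thus the ``$\Rightarrow$'' implication holds, which together with the trivial converse establishes the equivalence for every $\vec x\in T$.

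There is no serious obstacle here beyond the bookkeeping: one has to check that every appeal to Observation \ref{hered} is legitimate, i.e.\ that $T$ really is a rule for the subspace in question — which always follows from that subspace being a $\subseteq^*$-subspace of $V$ together with membership of the relevant finite block sequence in $T$ — and that the passage from $Y$ to the tail subspace $Y[n]$ correctly converts the hypothesis ``$\exists\,Y\subseteq X$'' into a witness already visible at the finite stage $W_k$ of the construction.
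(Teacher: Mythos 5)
Your proof is correct and follows essentially the same route as the paper: enumerate the relevant finite block sequences, shrink once for each in a descending chain, diagonalise to get $X\subseteq^*W_k$ for all $k$, and close the loop with Observation \ref{hered}. The only difference is that you spell out the $\subseteq$ versus $\subseteq^*$ bookkeeping (passing to a tail subspace $Y[n]$) that the paper leaves implicit, which is a legitimate and slightly more careful rendering of the same argument.
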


\begin{proof}Enumerate $T$ as $\vec x_0,\vec x_1,\ldots$ and define a sequence of subspaces $V=X_{-1}\supseteq X_0\supseteq X_1\supseteq\ldots$ as follows. If $X_{n-1}$ has been defined and there is some subspace $Y\subseteq X_{n-1}$ such that I has a strategy in $B^T_Y(\vec x_n)$ to play in $\A$, let $X_n=Y$, otherwise let $X_n=X_{n-1}$. Finally, let $X\subseteq V$ be any subspace such that $X\subseteq^*X_n$ for all $n$.

By construction and using Observation \ref{hered}, we then have for any $\vec x_n\in T$,  
\[\begin{split}
\e Y\subseteq X, &\text{ I has a strategy in $B^T_Y(\vec x_n)$ to play in $\A$ }\\
&
\equi\;\;
\text{I has a strategy in $B^T_{X_n}(\vec x_n)$ to play in $\A$}
\\
&
\equi\;\;
\a Y\subseteq X, \text{ I has a strategy in $B^T_{Y}(\vec x_n)$ to play in $\A$},
\end{split}\]
which proves the lemma.
\end{proof}

%%%%%%%%%%%%%%%%%%%%%%%%%

\begin{lemme}\label{davis0}
Suppose $\A\subseteq E^\infty$, $V\subseteq E$, $\vec v\in E^{<\infty}$ and a $(V,\vec v)$-rule $T\subseteq E^{<\infty}$ are given such that 
$$
\a W\subseteq V,  \text{ I has no strategy in $B^T_W(\vec v)$ to play in $\A$}.
$$
Then, if $|\vec v|$ is odd, there is $X\subseteq V$ such that for all $Y\subseteq X$ there is $x\in Y\cap T_{\vec v}$ satisfying 
$$
\a W\subseteq X, \text{ I has no strategy in  $B^T_W(\vec v\con x)$ to play in $\A$}.
$$
Similarly, if $|\vec v|$ is even, there is $X\subseteq V\cap T_{\vec v}$ such that, for any $x\in X$, 
$$
\a W\subseteq X,  \text{ I has no strategy in $B^T_W(\vec v\con x)$ to play in $\A$}.
$$
\end{lemme}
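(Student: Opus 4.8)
The plan is to obtain the statement as an essentially formal consequence of Lemma~\ref{diag} and Observation~\ref{hered}, the only substantive work being to unwind what the hypothesis says about the first vector that enters the outcome.

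\emph{Unwinding the hypothesis.} Because of the parity conventions in the definitions of the games, once the first vector $z$ entering the outcome has been played in $B^T_W(\vec v)$ --- together with the integer and the block subspace that are exchanged in that same round --- the continuation is \emph{literally} the game $B^T_W(\vec v\con z)$: the block sequence $\vec v\con z$ has the opposite parity to $\vec v$, and the integer and subspace traded in the opening round of $B^T_W(\vec v)$ are exactly the two opening moves of $B^T_W(\vec v\con z)$. Using properties (i) and (ii) of the $(V,\vec v)$-rule $T$ together with the infinite-dimensionality of block subspaces, one checks that neither player is ever stuck for a legal move in any of these games, so ``I has a winning strategy'' decomposes cleanly over this first round. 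Carrying this out, one gets: if $|\vec v|$ is odd, then ``I has no strategy in $B^T_W(\vec v)$ to play in $\A$'' is equivalent to ``for every $n$ there is $x\in W[n]$ with $\vec v\con x\in T$ such that I has no strategy in $B^T_W(\vec v\con x)$ to play in $\A$''; and if $|\vec v|$ is even, it is equivalent to ``there is $Z\subseteq W$ such that every $x\in Z$ with $\vec v\con x\in T$ satisfies: I has no strategy in $B^T_W(\vec v\con x)$ to play in $\A$''. In the odd case the vector $z=y_0$ is chosen by II inside the tail subspace $W[n_0]$ named by I, while in the even case $z=x_0$ is chosen by I inside the block subspace $Z_0$ named by II --- this is the source of the asymmetry between the two conclusions of the lemma.

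\emph{Stabilising and concluding.} Apply Lemma~\ref{diag} to $\A$, $V$, $\vec v$, $T$ to obtain $X\subseteq V$ such that, for every $\vec x\in T$, I has a strategy in $B^T_Y(\vec x)$ to play in $\A$ for some $Y\subseteq X$ if and only if I has one for every $Y\subseteq X$. In particular, taking $Y=X$, whenever $\vec v\con x\in T$ and I has no strategy in $B^T_X(\vec v\con x)$ to play in $\A$, then I has no strategy in $B^T_W(\vec v\con x)$ to play in $\A$ for \emph{any} $W\subseteq X$. Now suppose $|\vec v|$ is odd and fix $Y\subseteq X$; applying the unwound hypothesis with $W=Y$ yields $x\in Y$ with $\vec v\con x\in T$ and with I having no strategy in $B^T_Y(\vec v\con x)$ to play in $\A$. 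Since $Y\subseteq^*X$ and $T$ is an $(X,\vec v\con x)$-rule (rules are inherited by subspaces), the contrapositive of Observation~\ref{hered} gives that I has no strategy in $B^T_X(\vec v\con x)$ to play in $\A$, hence, by the previous sentence, none in $B^T_W(\vec v\con x)$ to play in $\A$ for any $W\subseteq X$. So this $X$ witnesses the conclusion. If instead $|\vec v|$ is even, apply the unwound hypothesis with $W=X$ to get $Z\subseteq X$ such that every $x\in Z$ with $\vec v\con x\in T$ has I with no strategy in $B^T_X(\vec v\con x)$ to play in $\A$; by property (ii) of $T$ at $\vec v$ there is $n$ with $\vec v\con z\in T$ for all $z\in V[n]$, so after replacing $Z$ by $Z\cap V[n]$ we may assume $\vec v\con x\in T$ for every vector $x$ of $Z$. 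Then $Z\subseteq V\cap T_{\vec v}$, and for every $x\in Z$, I has no strategy in $B^T_X(\vec v\con x)$ to play in $\A$, hence none in $B^T_W(\vec v\con x)$ to play in $\A$ for any $W\subseteq Z$. So this $Z$ witnesses the conclusion.

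The only delicate point is the unwinding: one must line up the integer/subspace bookkeeping of $B^T_W(\vec v)$ with the two opening moves of the parity-shifted game $B^T_W(\vec v\con z)$ exactly, and rule out spurious legal-move obstructions using properties (i) and (ii) of $T$ and the infinite-dimensionality of block subspaces. Granting that, the argument is just Lemma~\ref{diag} plus Observation~\ref{hered}.
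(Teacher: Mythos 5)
Your argument is correct and is essentially the paper's own proof: both stabilise with Lemma~\ref{diag} and then unwind the first round of $B^T_W(\vec v)$ according to the parity of $|\vec v|$, using the Observation that rules pass to subspaces to upgrade ``no strategy below one $Y$'' to ``no strategy below every $W\subseteq X$'', and finally trimming to a tail subspace inside $T_{\vec v}$ in the even case. The only cosmetic slip is the claim that \emph{both} the integer and the subspace traded in the opening round become the opening moves of the parity-shifted game --- in each case exactly one of them is consumed as a constraint on the first vector and only the other carries over --- but the displayed equivalences you actually use are the correct ones.
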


\begin{proof}
Using Lemma \ref{diag}, we begin by choosing $Z\subseteq V$ such that,  for all $\vec x\in T$,
\[\begin{split}
\e Y\subseteq Z, &\text{ I has a strategy in $B^T_Y(\vec x)$ to play in $\A$ }\\
&
\equi\;\;
\a Y\subseteq Z, \text{ I has a strategy in $B^T_Y(\vec x)$ to play in $\A$}.
\end{split}\]

Suppose first that $|\vec v|$ is odd. Since, for all $Y\subseteq Z$,  I has no strategy in $B^{T}_{Y}(\vec v)$ to play in $\A$, we see that for all $Y\subseteq Z$ player II must be able to play pick some $x\in  Y\cap T_{\vec v}$ for which  I still has no strategy in $B^{T}_{Y}(\vec v\con x)$ to play in $\A$, whereby also
$$
\a W\subseteq Z, \text{ I has no strategy in $B^{T}_{W}(\vec v\con x)$ to play in $\A$}.
$$
Letting $X=Z$ the result follows.

Suppose instead that $|\vec v|$ is even. Again I has no strategy in $B^{T}_{Z}(\vec v)$ to play in $\A$. Therefore, II must be able to play some $X\subseteq Z$ so that, for all $x\in X\cap T_{\vec v}$, player I has no strategy in $B^{T}_{Z}(\vec v\con x)$ to play in $\A$ and thus
$$
\a W\subseteq X, \text{ I has no strategy in $B^{T}_{W}(\vec v\con x)$ to play in $\A$}.
$$
Finally, since $T$ is also an $(X,\vec v)$-rule, there is some $n$ such that $X[n]\subseteq T_{\vec v}$, so by replacing $X$ with $X[n]$, we may assume that $X\subseteq V\cap T_{\vec v}$.
\end{proof}

%%%%%%%%%%%%%%%%%%%%%%%%%%%%%%%

\begin{lemme}\label{davis1}
Suppose $\A\subseteq E^\infty$, $V\subseteq E$, $\vec v\in E^{<\infty}$ and a $(V,\vec v)$-rule $T$ are given. Assume that 
$$
\a W\subseteq V, \text{    I has no strategy in $B^T_W(\vec v)$ to play in $\A$.}
$$
Then there is some $X\subseteq V$ and an $(X,\vec v)$-rule $S\subseteq T$ such that $[S]\cap {\rm Int}(\A)=\tom$ and
$$
\a W\subseteq X, \text{ I has no strategy in $B_Z^S(\vec v)$ to play in $\A$.}
$$
\end{lemme}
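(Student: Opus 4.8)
The goal is to "shrink" the rule $T$ down to a sub-rule $S$ on a subspace $X$ so that $[S]$ avoids the interior of $\A$, while preserving the hypothesis that I has no winning strategy in the $T$-induced game (now $S$-induced) below any $W\subseteq X$. The natural approach is a transfinite/iterated application of Lemma \ref{davis0}, one step per node of the rule, peeling off from $T$ exactly those one-step extensions that would force the outcome into $\text{Int}(\A)$; since $\text{Int}(\A)$ is open, membership of a branch in it is witnessed by a finite initial segment, so it suffices to arrange that no finite position of $S$ already lies inside a basic open box contained in $\A$. Concretely, enumerate $T$ as $\vec x_0,\vec x_1,\ldots$, and build a decreasing sequence $V=X_{-1}\supseteq X_0\supseteq X_1\supseteq\cdots$ together with the levels of $S$.

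First I would, at stage $n$, consider the node $\vec x_n$. If $\vec x_n\notin S$ (it has already been discarded, or an ancestor has), do nothing and set $X_n=X_{n-1}$. Otherwise, by the maintained invariant $\a W\subseteq X_{n-1}$, I has no strategy in $B^T_W(\vec x_n)$ to play in $\A$; apply Lemma \ref{davis0} to $\vec x_n$ (in the case $|\vec v\con\vec x_n|$ odd or even, respectively, relative to the base $\vec v$) to obtain $X_n\subseteq X_{n-1}$ such that the relevant one-step extensions $\vec x_n\con x$ still satisfy "I has no strategy in $B^T_W(\vec x_n\con x)$ to play in $\A$ for all $W\subseteq X_n$." Into $S$ I put those extensions $\vec x_n\con x$ (with $x\in X_n$, or $x\in X_n\cap T_{\vec x_n}$ in the existential/odd case — here one uses the fact from Lemma \ref{davis0} that player II can realise the choice inside $X_n$); crucially, one also throws away any extension $\vec x_n\con x$ whose box $N_{\vec v\con\vec x_n\con x}$ already sits inside $\A$, which is harmless precisely because Lemma \ref{davis0}'s conclusion "I has no strategy to play in $\A$" is incompatible with the whole cone above that position lying in $\A$ (if $N_{\vec v\con\vec x_n\con x}\subseteq\A$, then I trivially has a strategy from $\vec v\con\vec x_n\con x$). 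Finally take $X\subseteq V$ with $X\subseteq^* X_n$ for all $n$; by the Observation that rules are inherited under $\subseteq^*$ and by Observation \ref{hered}, $S$ restricted appropriately is an $(X,\vec v)$-rule, $S\subseteq T$, and the "no strategy" property is preserved below every $W\subseteq X$.

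For the conclusion $[S]\cap\text{Int}(\A)=\tom$: if $\mathbf{z}\in[S]$ and $\vec v\con\mathbf{z}\in\text{Int}(\A)$, then some finite initial segment $\vec v\con z_0\cdots z_m$ has its basic open box inside $\A$; but that initial segment is a node of $S$ of the form $\vec x_n\con z_m$ for the appropriate $n$, and at stage $n$ we explicitly refused to put into $S$ any one-step extension generating a box contained in $\A$ — contradiction. (One must check the bookkeeping: because $S$ is built level-by-level and each node is handled at the stage equal to its index in the enumeration, every node of $S$ of length $\geq |\vec v|+1$ was admitted at some finite stage, so the "no box inside $\A$" screening applies to all of them.) The preservation of "I has no strategy in $B^S_W(\vec v)$" — as opposed to $B^T_W(\vec v)$ — follows because $S\subseteq T$ makes $B^S_W(\vec v)$ a \emph{subgame} of $B^T_W(\vec v)$ for player I (fewer legal moves for I, more constraints), so a strategy for I in the smaller game would have to be checked against runs where II plays outside $S$; the point is that II's moves in $B^S_W$ are exactly the $S$-legal ones, and by construction every position reachable in $B^S_W(\vec v)$ with $W\subseteq X$ still satisfies "I has no strategy below $W$", so in particular I has no strategy from the root.

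**Main obstacle.** The delicate point is the interleaving of two inductions that do not obviously commute: the diagonalisation over all nodes of $T$ (which forces us to pass to ever-smaller subspaces and finally to a $\subseteq^*$-diagonal $X$) versus the requirement that the surviving rule $S$ be an honest $(X,\vec v)$-rule — i.e., that at every even node there is a \emph{single} tail $X[n]$ all of whose vectors extend within $S$, and at every odd node \emph{every} block subspace $Z\subseteq X$ contains a legal extension. Lemma \ref{davis0} is tailored to deliver exactly these two shapes, but one must verify that after diagonalising to $X\subseteq^* X_n$ the extension clauses, which were secured at the level of $X_n$, survive the passage to $X$ — this is where the Observations on $\subseteq^*$-inheritance of rules and of the "no strategy" property do the real work, and getting the quantifier order right ("for all $W\subseteq X$" must be deduced from "for all $W\subseteq X_n$" via $W\subseteq^* X_n$) is the crux of the argument.
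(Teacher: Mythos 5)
Your construction of $S$ (iterating Lemma \ref{davis0} node by node, discarding extensions whose basic box lies inside $\A$, and noting that this screening is in fact vacuous because a box inside $\A$ gives I a trivial winning strategy) is a reasonable reorganisation of the first half of the argument, and your treatment of $[S]\cap{\rm Int}(\A)=\tom$ and of the $\subseteq^*$-transfer of the rule property is essentially right. But there is a genuine gap in the last step, the claim that for all $W\subseteq X$ player I has no strategy in $B^S_W(\vec v)$ to play in $\A$. You justify this by saying that every position reachable in $B^S_W(\vec v)$ still satisfies ``I has no strategy below $W$''; that is a statement about the games $B^T_W(\vec v\con\vec x)$ at individual nodes, and it does not imply anything about the game $B^S_W(\vec v)$. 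The two games constrain player II differently: in $B^S_W$ II's vectors are forced to stay inside the smaller set $S$, which can only help I, so a strategy for I in $B^S_W(\vec v)$ is \emph{not} a strategy in $B^T_W(\vec v)$ --- in the larger game II may legally play a vector whose position lies in $T\setminus S$, and your hypothetical strategy has nothing to say there. Pointwise non-winnability of the $T$-games at the nodes of $S$ simply does not yield non-winnability of the $S$-game from the root.

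The paper closes exactly this gap with a shift argument, and for it to work one needs a property your $S$ does not have: that every reachable position in $T\setminus S$ is one from which I \emph{does} have a winning strategy in $B^T$ (so that if II ever escapes $S$, I can switch to that winning strategy, and a $B^S$-strategy plus these fallbacks assembles into a $B^T_W(\vec v)$-strategy, contradicting the hypothesis). In the paper this is arranged by first stabilising with Lemma \ref{diag} so that ``$\e Y\subseteq X$, I has a strategy in $B^T_Y(\vec x)$'' is equivalent to ``$\a Y\subseteq X$, \dots'', and then defining $S=\{\vec x\in T\del \text{I has no strategy in $B^T_X(\vec x)$ to play in }\A\}$ globally, so that $T\setminus S$ is by definition the set of winning positions for I; the rule property of $S$ is then verified using two further diagonalisations. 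In your construction a node $\vec x_n\con x$ can fall outside $S$ merely because $x$ misses the subspace or tail produced at stage $n$, not because I can win from it, so the fallback strategies need not exist. To repair the proof you should either adopt the paper's global definition of $S$ or prove separately that your excluded nodes are winning for I, and in either case you must actually carry out the shift; as written, the concluding inference is a non sequitur.
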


\begin{proof}
Using Lemma \ref{diag}, we begin by choosing $V_0\subseteq V$ so that,  for all $\vec x\in T$,
\begin{equation}\begin{split}\label{y}
\e Y\subseteq V_0, &\text{ I has a strategy in $B^T_Y(\vec x)$ to play in $\A$ }\\
&
\equi\;\;
\a Y\subseteq V_0, \text{ I has a strategy in $B^T_Y(\vec x)$ to play in $\A$}.
\end{split}\end{equation}

Now, enumerate the elements of $T$ of even length as $\vec x_0,\vec x_1,\ldots$ and define a sequence of subspaces $V_0=X_{-1}\supseteq X_0\supseteq X_1\supseteq\ldots$ as follows. If $X_{n-1}$ has been defined and there is some subspace $Y\subseteq X_{n-1}$ such that,  for all $y\in Y\cap T_{\vec x_n}$, I has no strategy in $B^T_{V_0}(\vec x_n \con y)$ to play in $\A$, then let $X_n=Y$ and otherwise $X_n=X_{n-1}$. Finally, pick some $V_1\subseteq V_0$ such that $V_1\subseteq^*X_n$ for all $n$. 

We conclude that, for all $\vec x\in T$ of even length,  
\begin{equation}\begin{split}\label{yy}
&    \e Y\subseteq V_1\;\; \a y\in Y\cap T_{\vec x}, \text{ I has no strategy in $B^T_{V_0}(\vec x \con y)$ to play in $\A$, }\\
& \saa\e n\;\; \a y\in V_1[n]\cap T_{ \vec x}, \text{ I has no strategy in $B^T_{V_0}(\vec x\con y)$ to play in $\A$. }
\end{split}\end{equation}

By a similar diagonalisation, we find some $X\subseteq V_1$, such that,  for all $\vec x\in T$ of odd length,
\begin{equation}\begin{split}\label{yyy}
&    \e Y\subseteq V_1\;\; \a y\in Y\cap T_{\vec x}, \text{ I has a strategy in $B^T_{{V_0}}(\vec x \con y)$ to play in $\A$, }\\
& \saa\e n\;\; \a y\in V_1[n]\cap T_{ \vec x}, \text{ I has a strategy in $B^T_{{V_0}}(\vec x\con y)$ to play in $\A$. }
\end{split}\end{equation}

We define
 $$
 S=\{\vec x\in T\del \text{I has no strategy in $B^T_X(\vec x)$ to play in } \A\}
 $$
 and note that $\vec v \in S$. We claim that $S$ is an $(X,\vec v)$-rule.

Suppose first $\vec x\in S$  has  even length. Then I has no strategy in $B^T_X(\vec x)$ to play in $\A$ and so II must be able to play some $Y\subseteq X$ such that no matter which $y\in Y\cap T_{\vec x}$ I plays, I still has no  strategy in $B^T_X(\vec x\con y)$ to play in $\A$. Therefore, by (\ref{y}) and (\ref{yy}), there is $n$ such that, for all $y\in X[n]\cap T_{\vec x}$,  I also has no strategy in $B^T_X(\vec x\con y)$ to play in $\A$. As also $T$ is an $(X,\vec  x)$-rule, choosing  $n$ large enough, we can ensure that $X[n]\subseteq T_{\vec x}$, whereby also $X[n]\subseteq  S_{\vec x}$.

Now, assume instead $\vec x\in S$  has  odd length. As I has no strategy in $B^T_X(\vec x)$ to play in $\A$, we see that, for any $n$, there is some $y\in X[n]\cap T_{\vec x}$ such that  I still has no strategy in $B^T_X(\vec  x\con y)$ to play in $\A$. 
Thus, by (\ref{y}) and the contrapositive of (\ref{yyy}), for any $Y\subseteq X$, there is some $y\in Y\cap T_{\vec x}$ such that I has no strategy in $B^T_X(\vec x\con y)$ to play in $\A$, i.e., for any $Y\subseteq X$ there is $y\in Y$ with ${\vec x\con y}\in S$. 
Thus, $S$ is an $(X,\vec v)$-rule.

As ${\rm Int}(\A)$ is open, by the definition of $S$, we have  $ [S]\cap {\rm Int}(\A)=\tom$. Also, if $W\subseteq X$ and $\sigma$ were strategy for player I in $B_W^S(\vec v)$ to play in $\A$, then we could obtain a strategy for I in $B^T_W(\vec v)$ to play in $\A$ as follows: I uses the strategy $\sigma$ until, if ever, he encounters the first position $\vec x\notin S_{\vec v}$.  At this point, by the definition of $S$,  he can shift to a strategy in $B^T_W(\vec v\con \vec x)$ for playing in $\A$. On the other hand, if the shift never happens, the outcome will lie in $\A$ anyway. Since I has no strategy in $B^T_W(\vec v)$ to play in $\A$, he cannot have one in $B^S_W(\vec v)$ either.
\end{proof}

%%%%%%%%%%%%%%%%%%%%%%%%%%%%%%%%%%%%%

For $D\subseteq E^{<\infty}$, we let  $\OO(D)$ denote the open set $\OO(D)=\bigcup_{\vec x\in D}N_{\vec x}\subseteq E^\infty$. We shall use the following elementary observation in the proof of Claim \ref{xxxxx} below. 
\begin{lemme}\label{extensions}
Suppose that $C,R\subseteq E^{<\infty}$ and that $C$ is closed under end-extensions, i.e., that if $\vec x\sqsubseteq \vec y$ and $\vec x\in C$, then also $\vec y\in C$. Assume that $\vec w\in R$ and that ${\bf x}\in N_{\vec w}\cap \OO(C\cap R)$. Then there is $\vec x\in C\cap R$ such that $\vec w\sqsubseteq \vec x\sqsubset {\bf x}$. 
\end{lemme}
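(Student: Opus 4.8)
The plan is to unwind the definitions and split into two cases according to whether the finite block sequence witnessing ${\bf x}\in\OO(C\cap R)$ is longer or shorter than $\vec w$.

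First I would use that $\OO(C\cap R)=\bigcup_{\vec z\in C\cap R}N_{\vec z}$ in order to fix some $\vec z\in C\cap R$ with ${\bf x}\in N_{\vec z}$. By definition of the basic open set $N_{\vec z}$, this says precisely that $\vec z$ is an initial segment of ${\bf x}$; since ${\bf x}$ is infinite while $\vec z$ is finite, we in fact have $\vec z\sqsubset{\bf x}$. In the same way, the assumption ${\bf x}\in N_{\vec w}$ gives $\vec w\sqsubset{\bf x}$. As $\vec w$ and $\vec z$ are then both initial segments of the single sequence ${\bf x}$, they are $\sqsubseteq$-comparable, i.e.\ either $\vec w\sqsubseteq\vec z$ or $\vec z\sqsubseteq\vec w$.

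In the first case, $\vec z$ itself does the job: putting $\vec x=\vec z$ we have $\vec x\in C\cap R$ and $\vec w\sqsubseteq\vec x\sqsubset{\bf x}$. In the second case, from $\vec z\sqsubseteq\vec w$ and $\vec z\in C$ we get $\vec w\in C$, since $C$ is closed under end-extensions; together with the hypothesis $\vec w\in R$ this yields $\vec w\in C\cap R$, and then $\vec x=\vec w$ works, as trivially $\vec w\sqsubseteq\vec w\sqsubset{\bf x}$. In either case the desired $\vec x$ has been produced.

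The only step that genuinely invokes a hypothesis rather than mere bookkeeping is the second case, where closure of $C$ under end-extensions is exactly what promotes $\vec z\in C$ to $\vec w\in C$; this is the single point to watch, though the obstacle is a very mild one.
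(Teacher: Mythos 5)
Your proof is correct and is essentially the paper's own argument: pick a witness $\vec z\in C\cap R$ that is an initial segment of ${\bf x}$, note that $\vec w$ and $\vec z$ are comparable as initial segments of the same infinite sequence, and in the case $\vec z\sqsubseteq\vec w$ use closure of $C$ under end-extensions together with $\vec w\in R$ to take $\vec x=\vec w$. Nothing to add.
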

\begin{proof}To see this, pick some $\vec y\in C\cap R$ with $\vec y\sqsubset {\bf x}$. If $\vec w\sqsubseteq \vec y$, we are done. If instead $\vec y\sqsubset\vec w$, then, as $C$ is closed under end-extension and $\vec w\in R$, we have $\vec w\in C_n\cap R$ and can let $\vec x=\vec w$. 
\end{proof}

\begin{lemme}\label{davis2}
Suppose $\A\subseteq E^\infty$, $\G\subseteq \A$ is a $G_\delta$ subset, $V\subseteq E$, $\vec v\in E^{<\infty}$ and a $(V,\vec v)$-rule $T$ are given. 
Assume that 
$$
\a W\subseteq V, \text{ I has no strategy in $B^T_X(\vec v)$ to play in $\A$.}
$$
Then there is some $X\subseteq V$ and an $(X,\vec v)$-rule $S\subseteq T$  such that $ [S]\cap \G=\tom$ and
$$
\a W\subseteq X,  \text{  I has no strategy in $B_Z^S(\vec v)$ to play in $\A$.}
$$
\end{lemme}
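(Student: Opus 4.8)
The plan is to adapt Davis' treatment of the $G_\delta$ level to the rule formalism, just as Lemma~\ref{davis1} did for the open level.

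First I would fix a presentation $\G=\bigcap_m U_m$ with $U_m$ open; replacing $U_m$ by $U_0\cap\dots\cap U_m$ I may assume the $U_m$ decrease, and writing $U_m=\OO(D_m)$ with $D_m=\{\vec x\in E^{<\infty}\del N_{\vec x}\subseteq U_m\}$ I obtain sets $D_m\subseteq E^{<\infty}$ that are closed under end-extension and decreasing in $m$. Define the \emph{level} $\ell(\vec x)=|\{m\del\vec x\in D_m\}|$; then $\ell$ is non-decreasing along end-extensions and ${\bf x}\in\G$ holds exactly when $\ell({\bf x}\begr k)\to\infty$. The point of the $D_m$'s is that once a play has entered $D_m$ it stays in $U_m$, so from a position in $D_m$ the goals ``$\A$'' and ``$\A\cap U_m$'' coincide; note also that $\A\cap U_m\subseteq\A$, so the hypothesis that I has no strategy in $B^T_W(\vec v)$ to play in $\A$ passes to every $\A\cap U_m$. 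I would then run the diagonalisations (\ref{y})--(\ref{yyy}) from the proof of Lemma~\ref{davis1} \emph{simultaneously} for all the countably many sets $\A\cap U_m$, passing to one subspace $X\subseteq V$ homogeneous for all of them.

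The rule $S$ would be defined so that $\vec x\in S$ exactly when $\vec x\in T$ and I has no strategy in $B^T_X(\vec x)$ to play in $\A\cap U_{\ell(\vec x)}$; then $\vec v\in S$ and $S\subseteq T$. That $S$ is an $(X,\vec v)$-rule is checked as in Lemma~\ref{davis1}; the one new feature is a continuation $\vec x\con z\in T_{\vec x}$ with $\vec x\con z\in D_{\ell(\vec x)}$, at which the level jumps --- but a short argument, using that entering $D_{\ell(\vec x)}$ already secures the ``$U_{\ell(\vec x)}$'' part of the goal, shows that such $\vec x\con z$ again lies in $S$. That I still has no strategy in $B^S_W(\vec v)$ to play in $\A$ is the verbatim ``shift'' argument from Lemma~\ref{davis1}: follow a putative such strategy until it first leaves $S$ at some $\vec x$, shift there to a $B^T$-strategy into $\A\cap U_{\ell(\vec x)}\subseteq\A$ (which exists by the definition of $S$ together with the homogenisation), and note that if the play never leaves $S$ the outcome lies in $\A$ by choice of the strategy --- either way contradicting the hypothesis on $\A$.

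The remaining and decisive point is $[S]\cap\G=\tom$, and this is where the argument genuinely exceeds the open case. From ${\bf x}\in[S]\cap\G$, Lemma~\ref{extensions} applied with $C=D_m$ and $R=S$ yields positions $\vec x_m\in D_m\cap S$ with $\vec x_m\sqsubset{\bf x}$, so that $\ell$ is unbounded along ${\bf x}$ while every $\vec x_m$ remains in $S$. I expect the main obstacle to lie exactly here: the definition of $S$ above does not by itself preclude this, since the level can be driven up along a branch by I's own moves, so the construction must be refined in the spirit of Davis' $G_\delta$ rank --- interleaving further diagonalisations of type (\ref{yy})--(\ref{yyy}) so that at every $\vec x\in S$ where it is II's turn, II may pass to a continuation of strictly larger level only when \emph{forced} to, with ``forced'' arranged, via the hypothesis on $\A$, to entail that I then has no strategy into $\A$ whatsoever from the new position. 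Along any branch of the resulting $S$ the level can then increase only finitely often, which is precisely $[S]\cap\G=\tom$; reconciling these level constraints with the requirement that $S$ be a genuine $(X,\vec v)$-rule, and with the earlier homogenisations, is the technical core of the proof.
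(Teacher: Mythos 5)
Your setup (a decreasing sequence of sets $D_m$ closed under end-extension, the level function, and the shift argument for ``I has no strategy in $B^S_W(\vec v)$ to play in $\A$'') is sound, but the proof has a genuine gap exactly where you flag it: the verification that $[S]\cap \G=\tom$. Your candidate
$S=\{\vec x\in T\del \text{I has no strategy in $B^T_X(\vec x)$ to play in } \A\cap U_{\ell(\vec x)}\}$
does not preclude branches along which the level tends to infinity while every initial segment stays in $S$, and the promised refinement (``II passes to a larger level only when forced to'') is left as a programme rather than an argument. This is not a routine technicality to be deferred; it is the entire content of the lemma beyond Lemma \ref{davis1}. Moreover, your chosen modification of the target --- shrinking it to $\A\cap U_m$ --- points in a direction different from the one that actually closes the argument, and it is unclear it can be made to work: nothing in that definition gives player II any leverage over player I's moves, which are the ones that can drive the level up along a branch of $S$.

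The paper's mechanism is worth contrasting with yours. It argues by contradiction at the level of the whole lemma: call $\vec x$ \emph{accepting} if the conclusion of the lemma holds at $\vec x$ (there is an $(X,\vec x)$-rule $S\subseteq T$ with $[S]\cap\G=\tom$ and no $B^S_W(\vec x)$-strategy for I into $\A$), let $R$ be the set of non-accepting positions after homogenising acceptance by diagonalisation, and \emph{enlarge} rather than shrink the target: Claim \ref{xxxx} shows that from any $\vec x\in R\cap T$ player I has a strategy in $B^T_X(\vec x)$ to play into $\A\cup\OO(C_n\cap R)$, by applying Lemma \ref{davis1} to this union and grafting, at any position landing in $C_n\setminus R$, a rule witnessing acceptance there. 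Claim \ref{xxxxx} then chains these strategies in Davis' fashion: if the chaining recurs infinitely often the outcome lies in $\bigcap_n\OO(C_n)=\G\subseteq\A$, and otherwise it lies in $\A$ by the last strategy used; either way I has a strategy into $\A$ from $\vec v\in R$, contradicting the hypothesis, so $\vec v$ accepts and the lemma follows. The self-referential set $R$ of bad positions, used as an auxiliary \emph{goal} for player I rather than as a constraint on II, is the missing idea in your proposal; without it (or an equivalent device) the level-control you describe for $S$ cannot be enforced.
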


\begin{proof}Since $\G$ is $G_\delta$, we can find $E^{<\infty}\supseteq C_1\supseteq C_2\supseteq\ldots$ such that $\G=\bigcap_n\OO(C_n)$ and where each $C_n$ is closed under end-extensions.

We say that $\vec x$ {\em accepts} $X$ if there is an $(X,\vec x)$-rule $S\subseteq T$ such that $[S]\cap \G=\tom$ and, for all $W\subseteq X$,  I has no strategy in $B^S_W(\vec x)$ to play in $\A$.

Notice that if $Y\subseteq^* X$ and $\vec x$ accepts $X$, as witnessed by an $(X,\vec x)$-rule $S\subseteq T$, then $S$ also witnesses that $\vec x$  accepts $Y$. Therefore, by a simple diagonalisation as in the proof of Lemma \ref{diag}, we can find some $X_0\subseteq V$ such that, for all $\vec x\in T$, 
\begin{equation}\label{x}\begin{split}
\e Y\subseteq X_0, \;\vec x \text{ accepts }Y\quad
\equi\quad
\a Y\subseteq X_0, \;\vec x \text{ accepts }Y.
\end{split}\end{equation}
Let also 
$$
R=\{\vec x\in E^{<\infty}\del  \vec x \text{ does not accept }X_0\}.
$$

Again, by applying Lemma \ref{diag} successively for every $n$,  we find some $X_1\subseteq X_0$ such that,  for all $\vec x\in T$ and $n$,
\begin{equation}\label{xx}\begin{split}
\e Y\subseteq X_1,& \text{  I has a strategy in $B^T_Y(\vec x)$ to play in $\A\cup \OO(C_{n}\cap R)$}\\
\equi\;\;
&\a Y\subseteq X_1, \text{  I has a strategy in $B^T_Y(\vec x)$ to play in $\A\cup \OO(C_{n}\cap R)$}.
\end{split}\end{equation}

We say that $(\vec x,n)$ {\em likes} $X\subseteq X_1$ if there is an $(X,\vec x)$-rule $S\subseteq T$ such that $[S]\cap \OO(C_n\cap R)=\tom$ and, for all $W\subseteq X$, I has no strategy in $B^S_W(\vec x)$ to play in $\A$.

Again, we see that if $(\vec x,n)$ likes $X$ and $Y\subseteq^*X$, then $(\vec x,n)$ also likes $Y$. So, by yet another diagonalisation, we find some $X\subseteq X_1$ such that, for all $\vec x\in T$ and $n$,
\begin{equation}\label{xxx}\begin{split}
\e Y\subseteq X, \;(\vec x, n) \text{ likes }Y\quad
\equi\quad
\a Y\subseteq X, \;(\vec x, n) \text{ likes }Y.
\end{split}\end{equation}

\begin{claim}\label{xxxx}
Suppose  $\vec x\in R\cap T$. Then, for any $n$,  I has a strategy in $B^T_X(\vec x)$ to play in $\A\cup \OO(C_{n}\cap R)$.
\end{claim}

\begin{proof}Assume that $\vec x\in T$ and that, for some $n$, I has no strategy in $B^T_X(\vec x)$ to play in $\A\cup \OO(C_{n}\cap R)$. Then, by (\ref{xx}), for all $Y\subseteq X$, I has also no strategy in $B^T_Y(\vec x)$ to play in $\A\cup\OO(C_n\cap R)$. 

As $ \OO(C_{n}\cap R)\subseteq{\rm Int}(\A\cup \OO(C_{n}\cap R))$,  by Lemma \ref{davis1}, $(\vec x,n)$ likes some $Y\subseteq X$ and thus by (\ref{xxx}) also likes $X$. In other words, there is an $(X,\vec x)$-rule $S\subseteq T$
such that $[S]\cap\OO(C_{n}\cap R)=\tom$ and, for all $W\subseteq X$,  I has no strategy in $B^S_W(\vec x)$ to play in $\A$.

Suppose that, in a run of the game $B^S_X(\vec x)$,  all positions $\vec y$ satisfy $\vec x\con\vec y\notin C_{n}$. Then the outcome of that run will not lie in $\OO(C_n)$ and hence not in $\G$ either. 

On the other hand, if ever a position $\vec y$ is reached such that $\vec x\con \vec y\in C_{n}$, then, as $[S]\cap\OO(C_{n}\cap R)=\tom$, actually $\vec x\con \vec y\in C_{n}\setminus R$, which means that $\vec x\con \vec y$ accepts $X_0$ and hence by (\ref{x})  also accepts $X$. Therefore, there is an $(X,\vec x\con \vec y)$-rule $S'\subseteq T$ such that $[S']\cap \G=\tom$ and, for all $W\subseteq X$,  I has no strategy in $B^{S'}_W(\vec x\con \vec y)$ to play in $\A$. 

Thus, we can modify $S$ such that if ever such a $\vec y$ is encountered, the game continues inside the $(X,\vec x\con \vec y)$-rule $S'$ (depending on $\vec y$). The resulting $(X,\vec x)$-rule $\tilde S\subseteq T$
then satisfies $[\tilde S]\cap \G=\tom$ and, for all $W\subseteq X$,  I has no strategy in $B^{\tilde S}_W(\vec x)$ to play in $\A$. But this shows that $\vec x$ accepts $Y$ and so $\vec x\notin R$, proving our claim.
\end{proof}

\begin{claim}\label{xxxxx}
If $\vec v\in R\cap T$, then I has a strategy in $B^T_X(\vec v)$ to play in $\A$.
\end{claim}

\begin{proof}
Assume $\vec v\in R\cap T$. We describe a strategy for I in $B^T_X(\vec v)$ to play in $\A$. 

First, by Claim \ref{xxxx}, I has a strategy in $B^T_X(\vec v)$ to play in $\A\cup \OO(C_{1}\cap R)$, and so I follows this strategy until, if ever, a first position $\vec x_1$ is encountered, such that $\vec v\con \vec x_1\in C_1\cap R$.  If no such position is encountered, then, by Lemma \ref{extensions}, then outcome will not lie in $\OO(C_1\cap R)$ but must lie in $\A$. So suppose instead that $\vec x_1$ exists and let $n_1$ be the supremum of $n$ such that $\vec v\con \vec x_1\in C_n\cap R$. 
If $n_1=\infty$, then $N_{\vec v\con \vec x_1}\subseteq \bigcap_n\OO(C_n)=\G\subseteq \A$ and thus I can finish the game randomly. 
If, on the other hand,  $n_1$ is finite, then by Claim \ref{xxxx} player  I has a  strategy in $B^T_X(\vec v\con\vec x_1)$ to play in $\A\cup \OO(C_{n_1+1}\cap R)$ and he will continue with this strategy until, if ever, a further position $\vec x_2$ is encountered such that $\vec v\con \vec x_1\con \vec x_2\in C_{n_1+1}\cap R$. 
Again, if this does not happen, then outcome of the game will lie in $\A$. So assume instead that $\vec x_2$ exists and let $n_2$ be the supremum of $n$ such that $\vec v\con \vec x_1\con \vec x_2\in C_n\cap R$. 
If $n_2=\infty$, then,  as before, $N_{\vec v\con \vec x_1\con \vec x_2}\subseteq \A$ and I can finish the game randomly.
If $n_2$ is finite, I has a  strategy in $B^T_X(\vec v\con\vec x_1\con \vec x_2)$ to play in $\A\cup \OO(C_{n_2+1}\cap R)$, etc.

 Now, if this procedure is repeated infinitely often, we have that the outcome $\vec v\con \vec x_1\con\vec x_2\con\ldots$ belongs to $\bigcap_n\OO(C_n)=\G\subseteq \A$. On the other hand, if the procedure is only repeated finitely often, we instead have that the outcome lies in $\A$.
\end{proof}
Since the conclusion of Claim \ref{xxxxx} contradicts the assumption of the lemma, it follows that $\vec v\notin R\cap T$ and thus $\vec v\notin R$, implying the conclusion of the lemma.
\end{proof}

%%%%%%%%%%%%%%%%%%%%%%%%%%%%%%%%%%%%%%%%%%%%%%%%%

\begin{thm}\label{gdeltasigma}
Suppose $\F\subseteq E^\infty$ is $F_{\sigma\delta}$ or $G_{\delta\sigma}$. Then there is $X\subseteq E$
such that either
\begin{enumerate}
  \item II has a strategy in $A_X$ to play in $\sim\F$, or
  \item I has a strategy in $B_X$ to play in $\F$.
\end{enumerate}
\end{thm}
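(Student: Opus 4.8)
We treat the case where $\F$ is $G_{\delta\sigma}$; the case where $\F$ is $F_{\sigma\delta}$ is entirely symmetric, obtained by interchanging throughout the roles of players I and II and of the games $A$ and $B$ (equivalently, by applying the $G_{\delta\sigma}$ case to $\sim\!\F$ and transporting the resulting alternative along the duality between the games $A_V(\vec v)$ and $B_V(\vec v)$).

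So write $\F=\bigcup_{n\geqslant 1}\G_{n}$ with each $\G_{n}$ a $G_{\delta}$ subset of $E^{\infty}$, and set $\A=\F$. If I has a strategy in $B_{X}=B_{X}(\tom)$ to play in $\F$ for some $X\subseteq E$, then alternative (2) holds and we are done; assume therefore that for every $X\subseteq E$ player I has no strategy in $B_{X}(\tom)$ to play in $\F$. We must produce $X\subseteq E$ realising alternative (1), which by Proposition \ref{quasistrategy} is exactly to produce an $(X,\tom)$-rule $S$ with $[S]\cap\F=\tom$.

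The plan is to strip off the pieces $\G_{n}$ one at a time using Lemma \ref{davis2}, and then fuse. Set $V_{0}=E$ and $T_{0}=E^{<\infty}$, which is an $(E,\tom)$-rule and, as $B^{T_{0}}_{W}(\tom)=B_{W}(\tom)$, satisfies the hypothesis of Lemma \ref{davis2}. Recursively, given a block subspace $V_{n}$ and a $(V_{n},\tom)$-rule $T_{n}$ such that I has no strategy in $B^{T_{n}}_{W}(\tom)$ to play in $\F$ for any $W\subseteq V_{n}$, apply Lemma \ref{davis2} with $\G=\G_{n+1}\subseteq\A$ to obtain $V_{n+1}\subseteq V_{n}$ and a $(V_{n+1},\tom)$-rule $T_{n+1}\subseteq T_{n}$ with $[T_{n+1}]\cap\G_{n+1}=\tom$ and with I having no strategy in $B^{T_{n+1}}_{W}(\tom)$ to play in $\F$ for any $W\subseteq V_{n+1}$. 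This produces nested sequences $V_{0}\supseteq V_{1}\supseteq\cdots$ and $T_{0}\supseteq T_{1}\supseteq\cdots$ with $[T_{n}]\cap\G_{n}=\tom$ for all $n\geqslant 1$.

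Finally pick $X\subseteq E$ with $X\subseteq^{*}V_{n}$ for all $n$; by the Observation following Lemma \ref{davis0}, each $T_{n}$ is then an $(X,\tom)$-rule. We would like $S=\bigcap_{n}T_{n}$ to be the desired rule: since $\vec x\in S$ implies $\vec x\in T_{n}$ for all $n$, one has $[S]\subseteq\bigcap_{n}[T_{n}]$, hence $[S]\cap\G_{n}=\tom$ for every $n\geqslant 1$ and so $[S]\cap\F=\tom$; and $\tom\in S$. The one delicate point --- and the step I expect to be the main obstacle --- is to ensure that $S$ really is an $(X,\tom)$-rule, i.e.\ that clauses (i) and (ii) of the definition survive the infinite intersection. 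Membership ``$\vec x\in T_{n}$'' is non-increasing in $n$ and hence eventually constant, so each finite block sequence has a well-defined status in $S$; what must be controlled is that, at each surviving even-length node $\vec x$, a common tail bound ``$\vec x\con z\in T_{n}$ for all $z\in X[m]$'' works for all large $n$, and that, at each surviving odd-length node $\vec x$ and each block subspace $Z\subseteq X$, a single witness $z\in Z$ with $\vec x\con z\in T_{n}$ serves for all large $n$. As there are only countably many such nodes and subspaces, this is a standard fusion, provided the recursion is run with the appropriate bookkeeping; securing it calls for a mild strengthening of Lemma \ref{davis2}, to the effect that once finitely many such pieces of rule data on $T_{n}$ have been frozen, the output $V_{n+1},T_{n+1}$ can be chosen to respect them at any retained node --- which one checks by inspecting the diagonalisations and the definition of the sub-rule in the proof of Lemma \ref{davis2}, which leave room to keep a prescribed tail at an even node and a prescribed witness at an odd node. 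Granting the fusion, $S$ is an $(X,\tom)$-rule with $[S]\cap\F=\tom$, so by Proposition \ref{quasistrategy} player II has a strategy in $A_{X}$ to play in $\sim\!\F$, establishing alternative (1).
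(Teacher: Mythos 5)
Your overall strategy---iterate Lemma \ref{davis2} to strip off the $G_\delta$ pieces $\G_n$ one at a time---is the right instinct, and you have correctly located the dangerous step, but the gap you flag is not one that a ``standard fusion'' or a ``mild strengthening of Lemma \ref{davis2}'' can close. The obstruction is clause (i) of the definition of a $(V,\vec v)$-rule: at an odd-length node $\vec x$ it demands that \emph{for every} block subspace $Z\subseteq X$ there be some $z\in Z$ with $\vec x\con z$ in the rule. There are uncountably many block subspaces $Z$, so your assertion that ``there are only countably many such nodes and subspaces'' is false, and one cannot freeze, finitely much per stage, a witness for each pair $(\vec x,Z)$. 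Concretely, each set $D_n=\{z\del \vec x\con z\in T_n\}$ meets every block subspace of $V_n$, but $\bigcap_n D_n$ need not meet every block subspace of $X$; ruling this out is exactly where the Ramsey-theoretic content of the theorem lives, and your recursion does not supply it. (The even-length clause, which only quantifies over tails $X[m]$, is the easy half and could indeed be handled by bookkeeping.)

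The paper's proof avoids ever forming an infinite intersection of rules. It builds a single tree $T$ node by node along an enumeration $\vec v_0,\vec v_1,\ldots$ of $E^{<\infty}$: at a node $\vec v_k\in T$ it applies Lemma \ref{davis2} \emph{once}, to the rule $S_k$ attached to that node and to the single set $\G_k$, obtaining a refined rule $R\subseteq S_k$ with $[R]\cap\G_k=\tom$; it then invokes Lemma \ref{davis0} to decide which immediate successors $\vec v_k\con x$ enter $T$ (this is what makes $T$ itself a rule, directly, with no limit process), and assigns $S_l=R$ to each such successor. The payoff is that every infinite branch of $T$ passes through nodes $\vec v_{k_0}\sqsubset\vec v_{k_1}\sqsubset\cdots$ with $k_m\to\infty$, and hence, for each $l$, eventually has all its initial segments in some single $S_{k_{n+1}}$ whose branches miss $\G_{k_n}\supseteq\G_l$. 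Each $\G_l$ is thus avoided locally along the branch by one of the nested rules, rather than by a global rule obtained as an intersection; this reorganisation is the idea your proposal is missing.
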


\begin{proof}By symmetry, we can suppose that $\F$ is $G_{\delta\sigma}$.
Assume that for no $W\subseteq E$ does I have a strategy in $B_W$ to play in $\F$ and let $\G_0\subseteq \G_1\subseteq \G_2\subseteq \ldots$ be $G_\delta$ sets with union $\F$. Fix also an enumeration $\vec v_0,\vec v_1,\vec v_2,\ldots$ of $E^{<\infty}$ such that $n\leqslant m$ whenever $\vec v_n\sqsubseteq \vec v_m$.

We will construct a subspace $W\subseteq E$, a $(W,\tom)$-rule $T$ and, for every $\vec v_k\in T$, a $(W,\vec v_k)$-rule $S_k$ such that the following conditions are satisfied, 
\begin{itemize}
\item[(i)] if $\vec v_k\sqsubseteq \vec v_l$ with $\vec v_k,\vec v_l\in T$, then $S_l\subseteq S_k$, 
\item[(ii)] if $\vec v_k\sqsubset \vec v_l$ with $\vec v_k,\vec v_l\in T$, then $[S_l]\cap \G_k=\tom$.
\end{itemize}
We claim that then $T$ is a quasistrategy for II in $A_W$ to play in $\sim \F$. To see this, since $T$ is a $(W,\tom)$-rule and hence a quasistrategy for II in $A_W$, we only need to show that $[T]\cap \G_l=\tom$ for every $l$. So fix $l$ and assume that $\vec v_{k_0}\sqsubset \vec v_{k_1}\sqsubset \ldots$ for some $\vec v_{k_0}, \vec v_{k_1},\ldots \in T$. Pick some $k_n\geqslant l$ and note that for any $m>n$,
$$
\vec v_{k_m}\in S_{k_m}\subseteq S_{k_{n+1}},
$$
whence ${\bf v}=\bigcup_m\vec v_{k_m}\in [S_{k_{n+1}}]$. Since $ [S_{k_{n+1}}]\cap \G_l=\tom$, the claim follows.

It thus remains to construct $W$, $T$ and $S_k$ as above. This will be done in stages along with the construction of an auxiliary sequence $E\supseteq X_0\supseteq X_1\supseteq \ldots$ of subspaces.

At the outset of stage $k\geqslant 0$, we assume that either $\vec v_k\notin T$ or that $\vec v_k\in T$ and $S_{k}$ is an $(X_k,\vec v_k)$-rule such that 
$$
\a W\subseteq X_k, \text{ I has no strategy in $B^{S_{k}}_W(\vec v_k)$ to play in $\F$}.
$$
Supposing that $\vec v_k\in T$, we will then choose $X_{k+1}\subseteq X_k$, decide which $\vec v_l=\vec v_k\con x$ belong to $T$ (by necessity $l> k$) and, for those $\vec v_l$, define $S_l$ in such a way that 
\begin{itemize} 
\item[(1)] for every $\vec v_l=\vec v_k\con x\in T$, $S_l$ is an $(X_{k+1},\vec v_l)$-rule with $[S_l]\cap \G_k=\tom$ and $S_l\subseteq S_k$,
\item[(2)] for all $\vec v_l=\vec v_k\con x\in T$ and $W\subseteq X_{k+1}$, I has no strategy in $B^{S_l}_W(\vec v_l)$ to play in $\F$.
\end{itemize}
Moreover, if $|\vec v_k|$ is odd, we will ensure that
\begin{itemize}
\item[(3)] for every $V\subseteq  X_{k+1}$, there is $x\in V$ such that $\vec v_k\con x\in T$,  
\end{itemize}
and, if $|\vec v_k|$ is even, that 
\begin{itemize}
\item[(3')] $\vec v_k\con x\in T$ for all $x\in X_{k+1}$.
\end{itemize}
Since also the sequence $X_0\supseteq X_1\supseteq \ldots$ is decreasing, it follows that, if $\vec v_l=\vec v_k\con x\in T$, then at stage $l$,   $S_l$ is an $(X_{l},\vec v_l)$-rule and, for every $W\subseteq X_{l}$, I has no strategy in $B^{S_l}_W(\vec v_l)$ to play in $\F$ and thus the assumptions of stage $l$ are verified.

\

\noindent{ \bf Initial stage:} We begin by putting $\vec v_0=\tom \in T$ and set $S_{0}=E^{<\infty}$, $X_0=E$. Then, by our initial assumptions, $S_0$ is an $(X_0,\vec v_0)$-rule such that, for all $W\subseteq X_0$, I has no strategy in $B_W=B_W^{S_0}(\vec v_0)$ to play in $\F$.

\noindent{ \bf Stage $k\geqslant 0$:} If $\vec v_k\notin T$, set $X_{k+1}=X_k$ and proceed to stage $k+1$. Suppose on the other hand that $\vec v_k\in T$ and that $S_{k}$ is an $(X_k,\vec v_k)$-rule such that 
$$
\a W\subseteq X_k, \text{ I has no strategy in $B^{S_{k}}_W(\vec v_k)$ to play in $\F$}.
$$
Now, using Lemma \ref{davis2}, we find $Y_{k}\subseteq X_k$ and a $(Y_k,\vec v_k)$-rule $R\subseteq S_{k}$ such that $[R]\cap \G_k=\tom$ and 
$$
\a W\subseteq Y_k, \text{ I has no strategy in $B^{R}_W(\vec v_k)$ to play in $\F$}.
$$

\noindent{ \bf Case 1, $|\vec v_k|$ is odd:} 
By Lemma \ref{davis0} we can find  a further $Z_k\subseteq Y_k$  such that, for all $V\subseteq Z_k$, there is $x\in V\cap R_{\vec v}$ satisfying 
$$
\a W\subseteq Z_k, \text{ I has no strategy in $B^{R}_W(\vec v_k\con x)$ to play in $\F$}.
$$
So put $\vec v_k\con x\in T$ if and only if both $x\in Z_k\cap R_{\vec v}$ and 
$$
\a W\subseteq Z_k, \text{ I has no strategy in $B^{R}_W(\vec v_k\con x)$ to play in $\F$}.
$$
Also,  for every $\vec v_l=\vec v_k\con x\in T$, let $S_l=R$. Finally, let $X_{k+1}=Z_k$. Then conditions (1), (2) and (3) are verified.

\noindent{ \bf Case 2, $|\vec v_k|$ is even:} 
By Lemma \ref{davis0} we can find  a further $X_{k+1}\subseteq Y_k\cap R_{\vec v}$ such that, for any $x\in X_{k+1}$, 
$$
\a W\subseteq X_{k+1},  \text{ I has no strategy in $B^{R}_W(\vec v\con x)$ to play in $\F$}.
$$
So put $\vec v_k\con x\in T$ if and only if $x\in X_{k+1}$. Also, for every $\vec v_l=\vec v_k\con x\in T$, let $S_l=R$. Then conditions (1), (2) and (3') are verified. 

\

At the end of the construction, we let $W\subseteq E$ be any space such that $W\subseteq^*X_k$ for all $k$. Now, $\tom \in T$. Also, by (3), if $\vec v\in T$ with  $|\vec v|$ even and $V\subseteq W$, then there is $x\in V$ such that $\vec v\con x\in T$. Similarly, by (3'), if $\vec v\in T$ with  $|\vec v|$ even, then there is an $n$ such that, for all $x\in W[n]$, $\vec v\con x\in T$. So $T$ is a $(W,\tom)$-rule. Finally, for $\vec v_l\in T$, $S_l$ is an $(X_{l},\vec v_l)$-rule and thus also a $(W,\vec v_l)$-rule and conditions (i) and (ii) are ensured by (1).
\end{proof}

\end{document}